\documentclass[12pt,reqno]{amsart}

\usepackage{amsmath}
\usepackage{amsthm}
\usepackage{amsfonts}
\usepackage{amssymb}
\usepackage{latexsym}
\usepackage{amscd}
\usepackage{stmaryrd}
\usepackage{amsbsy}

\usepackage{txfonts}

\allowdisplaybreaks
\setlength{\oddsidemargin}{0cm}
\setlength{\evensidemargin}{0cm}
\setlength{\topmargin}{0cm}
\setlength{\textheight}{22cm}
\setlength{\textwidth}{16cm}

\newcommand{\nn}{\nonumber}

\newcommand{\C}{{\mathbb C}}
\newcommand{\Z}{{\mathbb Z}}

\newcommand{\tr}{{\rm tr}}

\newcommand{\ka}{\kappa}

\pagestyle{plain}

\def\g{\operatorname{\mathfrak{g}}}
\def\tr{\operatorname{tr}}


\numberwithin{equation}{section}

\theoremstyle{plain}
\newtheorem{thm}{Theorem}[section]

\newtheorem{lem}[thm]{Lemma}
\newtheorem{prop}[thm]{Proposition}

\newtheorem{dfn}[thm]{Definition}
\newtheorem{exmp}[thm]{Example}
\newtheorem{re}[thm]{Remark}

\begin{document}
\title{Integral formulas for 
  quantum isomonodromic systems 
}
\date{}

\author{Hajime Nagoya} 
\address{Department of Mathematics, Kobe University,
 Kobe 657-8501, Japan, 
 Research Fellow of the Japan Society for the Promotion of Science}
\email{nagoya@math.kobe-u.ac.jp}

\begin{abstract}
We conisder time-dependent Schr\"odinger systems,  
which are quantizations of the Hamiltonian systems obtained from a similarity reduction of the 
Drinfeld-Sokolov hierarchy  by K.~Fuji and T.~Suzuki, and a similarity reduction of the UC hierarchy by 
T.~Tsuda, independently. These Hamiltonian systems describe isomonodromic deformations for 
certain Fuchsian systems. Thus, our Schr\"odinger systems can be regarded as quantum 
isomonodromic systems. 
Y.~Yamada conjectured that our quantum isomonodromic systems determine  
instanton partition functions in $\mathcal{N}=2$ $SU(L)$ gauge theory. 

The main purpose of this paper is to present  integral formulas as
particular solutions to our quantum isomonodromic systems. 
These integral formulas are  generalizations of the generalized hypergeometric 
function $_LF_{L-1}$. 
\end{abstract}
\maketitle

Mathematics Subject Classifications (2010): 17B80, 33C70, 34M56, 81R12, 81T40

Keywords: quantum isomonodromic systems, 
hypergeometric integrals, conformal field theory, quantum Painlev\'e systems, 
time-dependent Schr\"odinger systems

\section{Introduction}
Fix integers $L\ge 2$ and $N\ge 1$. 
We consider following time-dependent Schr\"odinger system
\begin{equation}\label{eq Sch Intro}
\ka \frac{\partial}{\partial z_i}\Psi({\bf q,z})=H_i\left({\bf q, \frac{\partial}{\partial q},z}\right)\Psi({\bf q,z})\quad 
(1\le i\le N)
\end{equation}
where $\ka\in \C$ and $\Psi({\bf q}, {\bf z})$ is an unknown function of 
\begin{equation*}
{\bf q}=\left(q_1^{(1)},\ldots, q_{L-1}^{(1)}, q_1^{(2)},\ldots, q_{L-1}^{(2)},\ldots, q_1^{(N)},\ldots, q_{L-1}^{(N)}\right)
\end{equation*}
and ${\bf z}=(z_1,\ldots,z_N)$. The Hamiltonians $H_i$ are defined in Definition \ref{def Hamiltonian}.

The Schr\"odinger system \eqref{eq Sch Intro} is a quantization of the classical Hamiltonian system $\mathcal{H}_{L,N}$  obtained from a similarity reduction of the Drinfeld-Sokolov hierarchy by 
K.~Fuji and T.~Suzuki ($L=3, N=1$) \cite{FS}, T.~Suzuki ($L\ge 2, N=1$) \cite{S1}, and  
a similarity reduction of the UC hierarchy by 
T.~Tsuda 
($L\ge 2, N\ge 1$) \cite{T2}, independently.  In \cite{T2}, 
T.~Tsuda 
showed that the classical Hamiltonian system $\mathcal{H}_{L,N}$ is 
equivalent to a Schlesinger system governing 
isomonodromic deformation for a certain Fuchsian system. 

On the other hand, 
Y.~Yamada conjectured  in the context of the so-called AGT relation that 
the instanton partition function, in the presence of the full surface operator in $\mathcal{N} = 2$ $SU(L)$ gauge theory, is determined by
the Schr\"odinger system \eqref{eq Sch Intro}  for $N=1$ \cite{Y}.   
In the case of $L=2$, the Schr\"odinger system \eqref{eq Sch Intro} is a quantization of the Garnier systems \cite{Garnier}, 
\cite{KO}, which has been appeared in the conformal field theory  \cite{Teschner}. 

In this paper, we present a family of hypergeometric integrals as particular solutions 
to the Sch\"rodinger system \eqref{eq Sch Intro}. These solutions are polynomials in ${\bf q}$
with the degree $M\in \Z_{\ge 1}$ and  the coefficients are 
integral representations of hypergeometric type. 

A key to find special solutions to quantum isomonodromic systems is to observe 
special solutions to the corresponding classical isomonodromic systems. For example, 
both the classical and  quantum sixth Painlev\'e equation has 
a particular solution expressed 
in terms of 
the Gauss hypergeometric function  \cite{N HGS}. 

It is known that 
the classical Hamiltonian system $\mathcal{H}_{L,N}$ has a particular solution  
expressed in terms of a generalization of the Gauss hypergeometric function by 
T.~Suzuki ($L\ge 2, N=1$) \cite{S2}, T.~Tsuda 
($L\ge 2, N\ge 1$) 
\cite{T1}. 
Observing the linear Pfaffian system derived from this generalization of the Gauss hypergeometric function, we see  
indeed that  hypergeometric integrals given in \cite{T1} yield a particular 
solution to the Schr\"odinger system \eqref{eq Sch Intro}:
\begin{thm}
The integral formula
\begin{equation}\label{eq IF Intro M=1}
\int_\Delta\prod_{n=1}^{L-1}t_n^{\alpha_n/\ka}
\prod_{i=1}^N\left(
1-z_it_{L-1}
\right)^{-\beta_i/\ka}\prod_{n=1}^{L-1}\left(
t_{n-1}-t_n
\right)^{-\gamma_n/\ka}
\left(\varphi_0(t)-\sum_{i=1}^N\sum_{n=1}^{L-1}\varphi_n^{(i)}(t)q_n^{(i)}\right), 
\end{equation}
which is a polynomial in ${\bf  q}$ with the degree $1$, is a particular solution to the 
Schr\"odinger system \eqref{eq Sch Intro}. 
Here $\Delta$ is a twist cycle and $\varphi_0(t)$, $\varphi_n^{(i)}(t)$ are certain 
rational ($L-1$)-forms defined in \eqref{eq basis phi}. 
\end{thm}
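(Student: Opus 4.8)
The plan is to verify the Schr\"odinger equations \eqref{eq Sch Intro} directly, by showing that applying the operator $\ka\partial_{z_i}-H_i$ to the integrand produces, for each power of $\mathbf q$, a \emph{twisted-exact} $(L-1)$-form whose integral over the twist cycle $\Delta$ vanishes. Write the integrand as $\Phi(t,\mathbf z)\,\eta(t,\mathbf q,\mathbf z)$, where
\begin{equation*}
\Phi=\prod_{n=1}^{L-1}t_n^{\alpha_n/\ka}\prod_{i=1}^N(1-z_it_{L-1})^{-\beta_i/\ka}\prod_{n=1}^{L-1}(t_{n-1}-t_n)^{-\gamma_n/\ka}
\end{equation*}
is the multivalued master function and $\eta=\varphi_0-\sum_{i,n}q_n^{(i)}\varphi_n^{(i)}$ is the rational $(L-1)$-form, linear in $\mathbf q$, built from the basis forms \eqref{eq basis phi}. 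Denote by $\nabla_\Phi=d+d\log\Phi\wedge$ the twisted differential; since $\Delta$ is a twist cycle, $\int_\Delta\Phi\,\nabla_\Phi\xi=\int_\Delta d(\Phi\xi)=0$ for any rational $(L-2)$-form $\xi$, so exhibiting primitives under $\nabla_\Phi$ is exactly what kills each integrand.

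First I would compute $\ka\partial_{z_i}(\Phi\eta)$. Only the factor $(1-z_it_{L-1})^{-\beta_i/\ka}$ carries $z_i$, so $\ka\partial_{z_i}\log\Phi=\beta_it_{L-1}/(1-z_it_{L-1})$; since $\mathbf q$ are treated as independent of $\mathbf z$, the result is $\Phi$ times a rational form that is again at most linear in $\mathbf q$. Next I would apply $H_i$ to $\Phi\eta$. Because $\eta$ is linear in $\mathbf q$, each momentum $\partial/\partial q_n^{(i)}$ simply replaces $\eta$ by $-\varphi_n^{(i)}$ and second momenta annihilate it, while multiplication operators by $q$ raise the degree. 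The key structural point, forced by the explicit form of the Hamiltonians in Definition \ref{def Hamiltonian}, is that the terms of degree $2$ in $\mathbf q$ produced by $H_i(\Phi\eta)$ must cancel; I would verify this cancellation first, so that $(\ka\partial_{z_i}-H_i)(\Phi\eta)=\Phi\,\omega_i$ with $\omega_i$ a rational $(L-1)$-form of degree at most $1$ in $\mathbf q$.

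It then remains to show that each $\mathbf q$-homogeneous component of $\omega_i$ is twisted-exact. Collecting the coefficient of the $\mathbf q$-independent part and of each $q_m^{(j)}$ gives finitely many rational $(L-1)$-forms $\omega_{i,0},\omega_{i,m}^{(j)}$, and the theorem reduces to identities $\omega_{i,0}=\nabla_\Phi\xi_{i,0}$ and $\omega_{i,m}^{(j)}=\nabla_\Phi\xi_{i,m}^{(j)}$ for suitable $(L-2)$-forms $\xi$. These are precisely the twisted-cohomology relations underlying Tsuda's hypergeometric solution of the classical system $\mathcal H_{L,N}$: the forms $\varphi_0,\varphi_n^{(i)}$ span the relevant twisted cohomology, and their Gauss--Manin relations supply the primitives. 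Equivalently, this step asserts that the period integrals $\int_\Delta\Phi\varphi_0$ and $\int_\Delta\Phi\varphi_n^{(i)}$ satisfy the linear Pfaffian system of \cite{T1}, and that the scalar operator $\ka\partial_{z_i}-H_i$ annihilates $\Psi=f_0-\sum q f$ exactly when that Pfaffian system holds.

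The main obstacle is the explicit construction of the primitives $\xi$ together with the degree-$2$ cancellation: both require the precise coefficients of $H_i$ from Definition \ref{def Hamiltonian} and the precise rational forms $\varphi$ from \eqref{eq basis phi}, and the bookkeeping across the $N(L-1)$ variables $q_n^{(i)}$ is heavy. Once the primitives are exhibited, Stokes' theorem on the twist cycle contributes no boundary terms and finishes the proof.
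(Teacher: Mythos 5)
Your plan follows essentially the same route as the paper: first control the $\mathbf q$-degree of $H_i\Psi_1$ (this is Proposition \ref{prop action H}; note the quadratic terms cancel only under the resonance condition $\ka_0-\sum_{i=1}^N\theta_i=1$, a hypothesis you should state explicitly rather than attribute to the form of the Hamiltonians alone), and then match the resulting coefficient system with the linear Pfaffian system satisfied by the twisted periods $\int_\Delta U\varphi_0$ and $\int_\Delta U\varphi_n^{(j)}$, your primitives $\xi$ being exactly the coboundaries the paper exhibits in its general-$M$ argument. This is the paper's own strategy, so no change of approach is needed beyond carrying out the computation.
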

(see Theorem \ref{thm M=1})

\medskip 

In order to generalize  the integral formula as particular solution to the case of 
polynomials in ${\bf q}$ with the degree $M\in\Z_{\ge 2}$, let us recall equivalence between 
the Knizhnik-Zamolodchikov equation of 
the 
conformal field theory and a quantization of a Schlesinger system \cite{Har}, \cite{R}. 
The KZ equations for the simple Lie algebra $\g$, have integral representations as 
solutions taking values in tensor products of Verma modules of $\g$  (see, for example, \cite{ATY}, 
\cite{SV}). 
From the point of view that the integral formula \eqref{eq IF Intro M=1} may be a solution to 
the Knizhnik-Zamolodchikov equation, it should be viewed that the integral variables are corresponding to the simple roots  of 
$\frak{sl}_L$. While, for the case of $L=2$ and $N=1$, it is known that the Schr\"odinger system \eqref{eq Sch Intro}, the quantum sixth 
Painlev\'e equation, has hypergeometric solutions \cite{N HGS}:
\begin{equation*}
\int_\Delta \prod_{1\le a<b\le M}(t^{(a)}-t^{(b)})^{2/\ka}\prod_{a=1}^M(t^{(a)})^{\alpha/\ka}(1-zt^{(a)})^{-\beta/\ka}
(1-t^{(a)})^{-\gamma/\ka}\left(
\varphi_0(t^{(a)})-\varphi_1^{(1)}(t^{(a)})q_1^{(1)}
\right).
\end{equation*}
Note that the integrand above consists of $M$-copies of the integrand of \eqref{eq IF Intro M=1} multiplied by the 
coupled term $\prod_{1\le a<b\le M}(t^{(a)}-t^{(b)})^{2/\ka}$.

Considering upon these, we arrive at 
\begin{thm}
The integral formula
\begin{align*}
\int_\Delta& \prod_{1\le a<b\le M, \atop 1\le n\le L-1}\left(t_n^{(a)}-t_n^{(b)}\right)^{2/\kappa}
\prod_{ 1\le a, b\le M,\atop 1\le n\le L-2 }
\left(t_n^{(a)}-t_{n+1}^{(b)}\right)^{-1/\kappa}
\\
&\times \prod_{a=1}^M\left\{\prod_{n=1}^{L-1}\left(
t_n^{(a)}
\right)^{\alpha_n/\ka}
\prod_{i=1}^N\left(1-z_it^{(a)}_{L-1}\right)^{-\beta_i/\ka}\left(
1-t_1^{(a)}
\right)^{-\gamma/\ka}
\left(\varphi_0(t^{(a)})-\sum_{i=1}^N\sum_{n=1}^{L-1}\varphi_n^{(i)}(t^{(a)})q_n^{(i)}\right)
\right\},
\end{align*}
which is a polynomial in ${\bf q}$ with the degree $M$, is a particular solution to the Schr\"odinger system 
\eqref{eq Sch Intro}. Here $\Delta$ is a skew-symmetric twist cycle. 
\end{thm}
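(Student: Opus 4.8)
\emph{Proof strategy.} The plan is to verify directly that applying the Schr\"odinger operator $\ka\,\partial/\partial z_i-H_i$ to the integrand yields a total derivative in the integration variables, so that integration over the closed twist cycle $\Delta$ annihilates it. Write the integrand as $\Phi\cdot F$, where $\Phi=\Phi(t,{\bf z})$ collects all the multivalued power factors and
\begin{equation*}
F=F({\bf q},t)=\prod_{a=1}^M\Bigl(\varphi_0(t^{(a)})-\sum_{i=1}^N\sum_{n=1}^{L-1}\varphi_n^{(i)}(t^{(a)})q_n^{(i)}\Bigr)
\end{equation*}
is the polynomial part, a product of $M$ factors each linear in ${\bf q}$. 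Factor further $\Phi=\Phi_{\mathrm c}\cdot\prod_{a=1}^M\Phi_1(t^{(a)})$, where $\Phi_1$ is the single-copy phase of \eqref{eq IF Intro M=1} and $\Phi_{\mathrm c}=\prod_{a<b,\,n}(t_n^{(a)}-t_n^{(b)})^{2/\ka}\prod_{a,b,\,n}(t_n^{(a)}-t_{n+1}^{(b)})^{-1/\ka}$ is the coupling factor. The goal is the identity $(\ka\,\partial_{z_i}-H_i)(\Phi F)=\sum_{a=1}^M\sum_{n=1}^{L-1}\partial_{t_n^{(a)}}\bigl(\Phi\,G_{n,i}^{(a)}\bigr)$ for suitable rational $G_{n,i}^{(a)}$ that are polynomial in ${\bf q}$.

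First I would expand the action by the Leibniz rule over the $M$ copies. Since each factor of $F$ is linear in ${\bf q}$, the first-order (momentum) part of $H_i$ acting on $F$ produces single-copy replacements $-\sum_a\varphi_n^{(i)}(t^{(a)})\prod_{b\neq a}F_b$, while the second-order part $\partial_{q_n^{(i)}}\partial_{q_m^{(j)}}$ forces the two derivatives onto two \emph{distinct} copies, yielding genuinely two-copy terms $\sum_{a\neq a'}\varphi_n^{(i)}(t^{(a)})\varphi_m^{(j)}(t^{(a')})\prod_{b\neq a,a'}F_b$. The single-copy contributions are controlled by the already established case $M=1$ (Theorem \ref{thm M=1}): for each fixed copy $a$, the single-copy part of $\ka\,\partial_{z_i}-H_i$ reproduces the total-derivative identity underlying \eqref{eq IF Intro M=1}, now merely carried along by the spectator factors $\prod_{b\neq a}F_b$ and by $\Phi_{\mathrm c}$. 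What is genuinely new are the two-copy terms together with the contributions of $\partial_{t_n^{(a)}}\log\Phi_{\mathrm c}$.

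The heart of the argument is to show these residual two-copy contributions are themselves exact. Here the exponents $2/\ka$ and $-1/\ka$ are decisive: they encode the Cartan matrix of $\frak{sl}_L$ (diagonal $2$, adjacent $-1$), reflecting the Knizhnik--Zamolodchikov/screening-operator origin of the integrand, so that
\begin{equation*}
\ka\,\partial_{t_n^{(a)}}\log\Phi_{\mathrm c}
=\sum_{b\neq a}\frac{2}{t_n^{(a)}-t_n^{(b)}}
-\sum_{b}\frac{1}{t_n^{(a)}-t_{n+1}^{(b)}}
-\sum_{b}\frac{1}{t_n^{(a)}-t_{n-1}^{(b)}},
\end{equation*}
with the boundary conventions that terms with $n+1>L-1$ or $n-1<1$ are absent. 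I would match the simple poles produced by these logarithmic derivatives, as $t_n^{(a)}\to t_m^{(b)}$, against the two-copy terms coming from the quadratic part of $H_i$; after skew-symmetrizing over the copies $a=1,\dots,M$ --- which is precisely what the skew-symmetric twist cycle $\Delta$ permits, since functions odd in the copy indices integrate to zero against $\Delta$ --- the non-exact remainders cancel in pairs and the symmetric survivors assemble into the desired total derivatives $\partial_{t_n^{(a)}}\bigl(\Phi\,G_{n,i}^{(a)}\bigr)$.

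The main obstacle is exactly this bookkeeping of cross terms: verifying that the double- and simple-pole contributions of $\Phi_{\mathrm c}$ are forced to carry the coefficients dictated by the second-order structure of $H_i$, and that the skew-symmetrization genuinely removes every non-exact remainder rather than leaving a boundary-type residue on $\Delta$. Concretely, one must check the vanishing of the antisymmetric part of the residue at each collision locus $t_n^{(a)}=t_n^{(b)}$ and $t_n^{(a)}=t_{n\pm1}^{(b)}$, a delicate compatibility between the quadratic coupling $\Phi_{\mathrm c}$ and the quantization $H_i$. Granting this, Stokes' theorem on the closed cycle $\Delta$ completes the proof.
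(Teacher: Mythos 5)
Your strategy is in essence the paper's: one shows that $\ka\,\partial_{z_i}$ applied to the integrand equals the action of $H_i$ modulo exact forms, the single-copy terms reproducing the rank-one Pfaffian structure and the cross-copy terms produced by the coupling factor being absorbed against the quadratic part of $H_i$. Two caveats on the details you anticipate. First, the paper does not (and cannot quite) invoke Theorem \ref{thm M=1} for the single-copy contributions: the exponents shift for $M\ge 2$ (one must impose $\ka_n=1$ for $2\le n\le L-1$ and $\gamma=\ka_1+M-1$, and the diagonal $a=b$ factors $(t_n^{(a)}-t_{n+1}^{(a)})^{-1/\ka}$ of the coupling term replace the factors $(t_{n-1}-t_n)^{-\gamma_n/\ka}$ of the $M=1$ integrand), so the whole reduction is redone from scratch via the explicit coboundaries $X_n^{(i)}$ of \eqref{eq coboundary}. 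Second, the cancellation of the collision poles is not obtained by arguing that functions odd in the copy indices integrate to zero against a skew-symmetric cycle; instead the paper symmetrizes the cohomology classes themselves (the $\mathrm{Sym}$ over $\frak{S}_M^{L-1}$ in the definition of $\varphi_A$) and proves identities of symmetrized rational functions by exchanging variables inside $\mathrm{Sym}$ (Lemmas \ref{lem l<n Y}--\ref{lem l=0 Y}); no parity argument on $\Delta$ enters. Those four lemmas, together with the explicit matching of \eqref{eq thm final result} against the action of $H_i$ on $q^A$, constitute essentially all of the paper's proof and are precisely the ``bookkeeping of cross terms'' that your proposal identifies as the main obstacle but leaves unexecuted.
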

(see Theorem \ref{thm IF})

\medskip

The remainder of this paper is organized as follows. 
In section 2, we introduce  quantizations of the classical Hamiltonians of 
$\mathcal{H}_{L,N}$ and show that those quantum Hamiltonians are  mutually 
commutative. In section 3, we introduce our Schr\"odinger systems and discuss  properties of 
them. In section 4, 
we give integral formulas for solutions. 

\begin{re} As mentioned above, 
the classical Hamiltonian system $\mathcal{H}_{L,N}$ describes isomonodromic 
deformation for an $L\times L$ Fuchsian system
\begin{equation*}
\frac{\partial}{\partial u}\Phi(u) =\sum_{i=0}^{N+1}\frac{A_i}{u-u_i}\Phi(u),
\end{equation*}
where $u_0=1$, $u_i=1/z_i$ ($1\le i\le N$), and $u_{N+1}=0$, 
whose spectral type is given by the ($N+3$)-tuple
\begin{equation*}
(1,1,\ldots,1),(1,1,\ldots,1),(L-1,1),\ldots,(L-1,1)
\end{equation*}
of partitions of $L$. A spectral type defines multiplicities of the eigenvalues of 
each residue matrix $A_i$. 
Consequently, $L-1$  parameters are associated with 
 singular points $0$ and $\infty$, and one parameter is associated with 
 each singular point $u_i$ for $i=0,\ldots,  N$. Notice that in the integrand given in the Theorems above, $L-1$ parameters are associated with 
the singular point $0$, and one parameter is associated with each singular point 
$1$, $1/z_i$ ($1\le i\le N$). 
\end{re}

\section{Hamiltonian}
Let us define a non-commutative associative algebra $W_{L,N}$ over $\C$ with generators
\begin{align*}
&q_m^{(i)},  p_m^{(i)} \quad (1\le m\le L-1, \ 1\le i\le N),  
\\
&e_n,  \ka_n,  \theta_j, \hbar \quad (0\le n\le L-1, \ 0\le j\le N)
\end{align*}
and  commutation relations 
\begin{equation}\label{eq com rel}
\left[
 p_m^{(j)}, q_n^{(i)}
\right]=\delta_{n,m}\delta_{i,j}\hbar\quad (1\le n, m\le L-1,\ 1\le i, j\le N), 
\end{equation}
where $\delta_{i,j}$ is Kronecker's delta, 
and the other commutation relations are zero, and relations
\begin{equation*}
\sum_{m=0}^{L-1}e_{m}=\frac{L-1}{2},\quad \sum_{m=0}^{L-1}\ka_{m}=\sum_{i=0}^{N}\theta_{i}. 
\end{equation*} 

The non-commutative associative algebra $W_{L,N}$  is an Ore domain, so that 
we can define its skew field $\mathcal{K}_{L,N}$ (see, for example, \cite{bjork}, Chapter 1, Section 8). 

\medskip

\begin{dfn}\label{def Hamiltonian}
\begin{em}
We introduce Hamiltonians $H_i$ ($i=1,\ldots, N$) in the rational function field $W_{L,N}(z_{1},\ldots, z_{N})$ 
in variables $z_{1}$, \ldots, $z_{N}$ 
by 
\begin{align}
z_iH_i
=&\sum_{n=0}^{L-1}e_nq_n^{(i)}p_n^{(i)}
+\sum_{j=0}^N\sum_{0\le m<n\le L-1}q_m^{(i)}p_m^{(j)}q_n^{(j)}p_n^{(i)}
+\frac{1}{z_i-1}\sum_{m,n=0}^{L-1}q_m^{(i)}p_m^{(0)}q_n^{(0)}p_n^{(i)} \nn
\\
&+\sum_{j=1\atop  j\neq i}^N\frac{z_j}{z_i-z_j}\sum_{m,n=0}^{L-1}
q_m^{(i)}p_n^{(i)}q_n^{(j)}p_m^{(j)}
+\theta_i\left(
e_0+\ka_0-\sum_{j=1}^N\theta_j-\sum_{j=1,\atop j\neq i}^N\frac{\theta_j z_j}{z_i-z_j}
\right)
, \label{eq def Hamiltonian}
\end{align}
where 
\begin{align*}
&q_0^{(i)}=\theta_i+\sum_{m=1}^{L-1}q_m^{(i)}p_m^{(i)},\quad p_0^{(i)}=-1 \quad (1\le i\le N), 
\\
&q_m^{(0)}=-1,\quad p_m^{(0)}=\kappa_m+\sum_{i=1}^Nq_m^{(i)}p_m^{(i)},\quad (1\le m\le L-1),
\\
& q_0^{(0)}=\ka_{0}-\sum_{i=1}^{N}\theta_{i}-\sum_{i=1}^{N}\sum_{m=1}^{L-1}q_m^{(i)}p_m^{(i)},\quad p_0^{(0)}=-1.  
\end{align*}
\end{em}
\end{dfn}

The Hamiltonians $H_i$ ($i=1,\ldots, N$) are canonical quantization of the polynomial Hamiltonians in \cite{T2}, Appendix A.  What we mean by canonical quantization 
is, to replace the Poisson bracket with the commutator. 

Since the canonical variables  in the classical Hamiltonians are not 
separated,  
quantization of the Hamiltonians is not 
unique. In the following, we show that the Hamiltonians $H_i$ are mutually commutative and the Schr\"odinger equations 
associated with the Hamiltonians $H_i$ have integral formulas. 

\begin{exmp}
We give an example of the Hamiltonians $H_{i}$ in the case of $L=2$.  
   Set $\left(q_{i}, p_{i}\right)=\left(q_{1}^{(i)}, p_{1}^{(i)}\right)$.  
The Hamiltonian $H_{i}$ is expressed as follows:
\begin{align*}
z_{i}(z_{i}-1)H_{i}=&q_{i}\left(
\ka_{1}-\theta_{0}+\sum_{j=1}^{N}q_{j}p_{j}
\right)\left(
\ka_{1}+\sum_{j=1}^{N}q_{j}p_{j}
\right)+z_{i}\left(
\theta_{i}+q_{i}p_{i}
\right)p_{i}
\\
&-\sum_{j=1,\atop j\neq i}^{N}\frac{z_{j}}{z_{i}-z_{j}}\left(
\theta_{j}+q_{j}p_{j}
\right)q_{i}p_{j}
-\sum_{j=1,\atop j\neq i}^{N}\frac{z_{i}}{z_{i}-z_{j}}\left(
\theta_{i}+q_{i}p_{i}
\right)q_{j}p_{i}
\\
&-\sum_{j=1,\atop j\neq i}^{N}\frac{z_{i}(z_{j}-1)}{z_{j}-z_{i}}\left(
\theta_{i}+q_{i}p_{i}
\right)q_{j}p_{j}
-\sum_{j=1,\atop j\neq i}^{N}\frac{z_{i}(z_{j}-1)}{z_{j}-z_{i}}\left(
\theta_{j}+q_{j}p_{j}
\right)q_{i}p_{i}
\\
&-(z_{i}+1)\left(
\theta_{i}+q_{i}p_{i}
\right)q_{i}p_{i}-
\left(
(e_{1}-e_{0})z_{i}+e_{0}-e_{1}-\hbar+\ka_{1}-\ka_{0}
\right)q_{i}p_{i}
\end{align*}
plus some function in only $(z_{1},\ldots, z_{N})$. These Hamiltonians are  quantizations 
of the polynomial Hamiltonians for the Garnier system  \cite{KO}. 
\end{exmp}

\begin{exmp}\label{ex FS}
We give an example of the Hamiltonians $H_1$ in the case of $N=1$. 
 Set $\left(q_{m}, p_{m}\right)=\left(q_{m}^{(1)}, p_{m}^{(1)}\right)$, $H=H_1$, and $z=z_1$.  
The Hamiltonian $H$ is written in a {\it coupled} form  as follows: 
\begin{align*}
z(z-1)H=&\sum_{m=1}^{L-1}H_{\mathrm{VI}}\left(
\sum_{n=0}^{L-1}\alpha_{2n+1}-\alpha_{2m-1}-\eta, \sum_{n=0}^{m-1}\alpha_{2n}, \sum_{n=m}^{L-1}\alpha_{2n}, 
\alpha_{2n-1}\eta; q_m, p_m
\right)
\\
&+\frac{1}{4}\sum_{1\le m< n\le L-1}\left(
\left(
\left(
q_m-1
\right)p_mq_m+q_mp_m\left( q_m-1\right)+2\alpha_{2m-1}\left(q_m-1\right)
\right)
\left(
p_n\left(
q_n-z
\right)+\left(
q_n-z
\right)p_n
\right)\right.
\\
&+\left.
\left(
\left(
q_n-z
\right)p_nq_n+q_np_n\left(
q_n-z
\right)
+2\alpha_{2n-1}\left(q_n-z\right)
\right)
\left(
p_m\left(
q_m-1
\right)
+\left(
q_m-1
\right)p_m  
\right)
\right)
\end{align*}
plus some function in only $(z_{1},\ldots, z_{N})$, 
where 
\begin{align*}
H_{\mathrm{VI}}\left(
a_0, a_1, a_z, a; q,p
\right)=&
\frac{1}{6}\left(qp(q-1)p(q-z)
+(q-1)p(q-z)pq
+(q-z)pqp(q-1)+\right.
\\
&\left.+(q-z)p(q-1)pq
+(q-1)pqp(q-z)
+qp(q-z)p(q-1)\right)\nonumber
\\
&-\frac{1}{2}\left(a_0((q-1)p(q-z)
+(q-z)p(q-1))
+a_1(qp(q-z)+(q-z)pq)
\right.\nonumber
\\
&\left.+
(a_z-1)(qp(q-1)
+(q-1)pq)
\right)+a q. 
\nonumber
\end{align*}
Here, we let   
\begin{align*}
&\alpha_{2m-1}=\ka_n-\hbar\quad (1\le m\le L-1), \quad 
\alpha_{2m}=e_{m}-e_{m+1}-\ka_m+\hbar \quad (1\le m\le L-2),
\\
&\alpha_0=e_0-e_1,\quad \alpha_{2L-1}=-\ka_0+(L-2)\hbar,\quad \sum_{m=0}^{2L-1}\alpha_m=\kappa,
\quad \eta=-\ka_0+\theta_1-\frac{L-2}{2}\hbar. 
\end{align*}
The Hamiltonian $H$ is a quantization of the Hamiltonian obtained by Fuji-Suzuki ($L=3$) \cite{FS} and 
Suzuki ($L\ge 3$) \cite{S1}. The Hamiltonian $H_{\mathrm{VI}}$ is of the sixth quantum Painlev\'e equation with the affine Weyl group symmetry of type $D_4^{(1)}$ 
introduced in \cite{N quantum PVI}. 
\end{exmp}

\subsection{Commutativity}
The Hamiltonians $H_{i}$ ($i=1,\ldots, N$) are expressed in the following forms 
\begin{equation*}
-z_{i}^{2}H_{i}=\sum_{j=0,\atop j\neq i}^{N+1}\frac{\Omega_{i,j}}{u_{i}-u_{j}}, 
\end{equation*}
where $\Omega_{i,j}$ are elements in $W_{L,N}$ and $u_0=1$, $u_{i}=1/z_{i}$ ($i= 1,\ldots, n$) and $u_{N+1}=0$. 

For $i, j=1,\ldots, N$,  the forms $\Omega_{i,j}$ read as 
\begin{equation}\label{eq Omega}
\Omega_{i,j}=\frac{1}{2}\tr \left(
\widehat{A}^{(i)}\widehat{A}^{(j)}
\right), 
\end{equation}
where $\widehat{A}^{(i)}$ is a $L\times L$ matrix defined as 
\begin{equation}\label{eq A qp}
\left(\widehat{A}^{(i)}\right)_{m,n}=q_{m}^{(i)}p_{n}^{(i)}
\end{equation}
for $m,n=0,1,\ldots, L-1$, where $\left(\widehat{A}^{(i)}\right)_{m,n}$ is the ($m,n$) entry of the matrix $\widehat{A}^{(i)}$.   
%
The entries of $\widehat{A}^{(i)}$  satisfy the following commutation relations. 
\begin{lem}\label{lem A com}
It holds that 
\begin{equation}\label{eq A com}
\frac{1}{\hbar}
\left[\left(\widehat{A}^{i}\right)_{m,n}, \left(\widehat{A}^{j}\right)_{m',n'}\right]=\delta_{i,j}\left(
\delta_{n,m'}\left(\widehat{A}^{i}\right)_{m,n'}-\delta_{n',m}\left(\widehat{A}^{i}\right)_{m',n}\right)
\end{equation}
for $0\le m, n, m', n'\le L-1$ and $1\le i,j\le N$. 
\end{lem}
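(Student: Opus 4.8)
The plan is to reduce everything to the defining relations \eqref{eq com rel} together with the definitions of $q_0^{(i)}$ and $p_0^{(i)}$ from Definition \ref{def Hamiltonian}, and then to verify the claimed $\mathfrak{gl}_L$-type relation by a short case analysis on the indices. First I would dispose of the case $i\neq j$: every generator carrying the superscript $i$ commutes with every generator carrying the superscript $j$ by \eqref{eq com rel}, and since $q_0^{(i)}=\theta_i+\sum_{k=1}^{L-1}q_k^{(i)}p_k^{(i)}$ is built only from superscript-$i$ generators and the central element $\theta_i$, the whole matrix $\widehat A^{(i)}$ commutes entrywise with $\widehat A^{(j)}$. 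Hence both sides of \eqref{eq A com} vanish, matching the factor $\delta_{i,j}$. From now on I fix $i=j$ and suppress the superscript.

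For $i=j$ I would start from the general commutator identity
\[
[q_m p_n,\,q_{m'}p_{n'}]=q_m[p_n,q_{m'}]p_{n'}+q_mq_{m'}[p_n,p_{n'}]+[q_m,q_{m'}]p_{n'}p_n+q_{m'}[q_m,p_{n'}]p_n .
\]
Because $p_0=-1$ is central and the basic momenta commute among themselves, one has $[p_n,p_{n'}]=0$ for all $0\le n,n'\le L-1$, so the second term drops out. The remaining input is the table of elementary brackets that involve the index $0$; these follow directly from $p_0=-1$ and $q_0=\theta+\sum_{k=1}^{L-1}q_kp_k$:
\[
[p_0,\,\cdot\,]=0,\qquad [p_n,q_0]=\hbar p_n,\qquad [q_m,q_0]=-\hbar q_m,\qquad [q_0,p_n]=-\hbar p_n\quad(1\le m,n\le L-1),
\]
while for indices in $\{1,\dots,L-1\}$ the brackets are the canonical ones $[p_n,q_{m'}]=\hbar\delta_{n,m'}$, $[q_m,q_{m'}]=0$, $[q_m,p_{n'}]=-\hbar\delta_{m,n'}$.

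With these in hand I would run the case analysis according to how many of $m,n,m',n'$ equal $0$. When all four lie in $\{1,\dots,L-1\}$ the identity reduces to the standard oscillator realization of $\mathfrak{gl}_L$ and gives exactly $\hbar(\delta_{n,m'}q_mp_{n'}-\delta_{n',m}q_{m'}p_n)$. The substantive content is the index-$0$ cases, which I would verify by substitution: for instance, when $m=0$ the genuinely non-canonical contributions $[q_0,q_{m'}]p_{n'}p_n=\hbar q_{m'}p_{n'}p_n$ and $q_{m'}[q_0,p_{n'}]p_n=-\hbar q_{m'}p_{n'}p_n$ cancel (using $[p_n,p_{n'}]=0$), leaving only $\hbar\delta_{n,m'}q_0p_{n'}$; similarly, when $n=0$ the factor $p_0=-1$ converts $q_{m'}[q_m,p_{n'}]p_0$ into the expected $\hbar\delta_{n',m}q_{m'}$. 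Running through the remaining combinations (one, two, three, or four vanishing indices) in the same way reproduces \eqref{eq A com} with the full Kronecker deltas on $\{0,\dots,L-1\}$.

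The main obstacle is exactly this index-$0$ bookkeeping: because $p_0$ is the scalar $-1$ and $q_0$ is a composite (a partial Euler operator), the canonical relation $[p_n,q_m]=\hbar\delta_{n,m}$ does \emph{not} extend to the index $0$, so one cannot simply quote the oscillator construction of $\mathfrak{gl}_L$. What must be checked---and what makes the lemma work---is that the non-canonical commutators $[p_n,q_0]$, $[q_m,q_0]$ and $[q_0,p_n]$ conspire, via the vanishing of $[p_n,p_{n'}]$ and the value $p_0=-1$, to restore precisely the $\mathfrak{gl}_L$ commutation relations. I would isolate these cancellation identities first, after which the case analysis is routine.
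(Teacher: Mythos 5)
Your proposal is correct and is exactly the ``straightforward calculation'' the paper invokes without writing out: the case $i\neq j$ is immediate, and for $i=j$ the Leibniz expansion together with the non-canonical brackets $[p_n,q_0]=\hbar p_n$, $[q_m,q_0]=-\hbar q_m$ and the scalar $p_0=-1$ reproduces the $\mathfrak{gl}_L$ relations in every index-$0$ case, as your sample verifications show. No gap.
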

A proof is given by a straightforward calculation. 

Recall the definition of the Gaudin Hamiltonians  (see, for example, \cite{Har}, Section 2). 
The Gaudin Hamiltonians $G_i$ ($i=1,\ldots, N$) for $\frak{gl}_L$ are defined as 
\begin{equation*}
G_i=\frac{1}{2}\sum_{j=1,\atop j\neq i}^{N}\frac{\tr \left(
{B}^{(i)}{B}^{(j)}
\right)}{u_{i}-u_{j}}, 
\end{equation*}
where  $B^{(i)}$ ($i=1,\ldots, N$) are $L\times L$ matrices whose entries satisfy the commutation relations \eqref{eq A com}. 
Since the commutativity of Gaudin Hamiltonians equals  the so-called {\it infinitesimal braid relations}, 
 Lemma \ref{lem A com} yields:
\begin{align}
&\left[\Omega_{i,j}, \Omega_{k,l} \right]=0\quad (i,j,k,l \ \text{are distinct}),
\\
&\left[\Omega_{i,j}, \Omega_{i,k}+\Omega_{k,j}\right]=0\quad (i,j,k \ \text{are distinct})
\end{align}
for $i, j,k,l=1,\ldots, N$.  

The other elements $\Omega_{i, 0}$ and $\Omega_{i,N+1}$ ($i=1,\ldots, N$) are not expressed in a similar way as 
\eqref{eq Omega} and \eqref{eq A qp}. However, we can check 
by a straightforward calculation 
that the {\it infinitesimal braid relations} above hold even if $i,j,k,l=0,1,\ldots, N+1$. 
 Therefore, we have 
\begin{prop}\label{prop commutativity}
Hamiltonians $H_{i}$ ($i=1,\ldots, N$) are mutually commutative.  
\end{prop}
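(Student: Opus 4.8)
The plan is to reduce the commutativity of the Hamiltonians $H_i$ entirely to the infinitesimal braid relations among the elements $\Omega_{i,j}$, which is the standard mechanism by which Gaudin-type Hamiltonians commute. Writing $-z_i^2 H_i = \sum_{j\neq i} \Omega_{i,j}/(u_i - u_j)$, the bracket $[H_i, H_k]$ expands (up to the invertible scalar factors $z_i^2$, $z_k^2$, which lie in the center of the relevant localization and can be commuted past the $\Omega$'s since the $\Omega_{i,j}$ are elements of $W_{L,N}$ while the $z$'s are commuting rational-function coefficients) into a double sum
\begin{equation*}
\sum_{j\neq i}\sum_{l\neq k} \frac{1}{(u_i-u_j)(u_k-u_l)}\,[\Omega_{i,j}, \Omega_{k,l}].
\end{equation*}
First I would organize the index pairs $(j,l)$ appearing in this sum into three types: those where $\{i,j\}$ and $\{k,l\}$ are four distinct indices; those where the two pairs share exactly one common index; and those where $\{i,j\}=\{k,l\}$. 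The first type vanishes termwise by the first braid relation. The shared-index terms must be grouped so that the surviving combinations assemble into expressions of the form $[\Omega_{i,j}, \Omega_{i,k}+\Omega_{k,j}]$, which vanish by the second braid relation.

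The key computational point is the classical partial-fractions identity that makes the grouping work: for distinct $i,j,k$ one uses
\begin{equation*}
\frac{1}{(u_i-u_j)(u_k-u_j)} = \frac{1}{u_i-u_k}\left(\frac{1}{u_k-u_j}-\frac{1}{u_i-u_j}\right)
\end{equation*}
and its cyclic variants to collect the coefficient functions so that each clump of terms multiplies an expression annihilated by the braid relation. This is exactly the argument that shows Gaudin Hamiltonians commute, so I would invoke the structure recalled in the excerpt rather than rederive it; the point is that the rational coefficients $1/(u_i-u_j)$ reorganize precisely into the combinations for which the infinitesimal braid relations apply.

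The one genuine subtlety — and the step I expect to be the main obstacle — is that the reduction as stated in the excerpt only establishes the braid relations for $\Omega_{i,j}$ with $i,j$ in the range $1,\dots,N$, via Lemma \ref{lem A com} and the matrix form \eqref{eq Omega}. The elements $\Omega_{i,0}$ and $\Omega_{i,N+1}$ attached to the singular points $u_0=1$ and $u_{N+1}=0$ are \emph{not} of the symmetric-trace form \eqref{eq A qp}, because there $q_0^{(i)}$, $p_0^{(i)}$, $q_m^{(0)}$, $p_m^{(0)}$, $q_0^{(0)}$, $p_0^{(0)}$ are the composite expressions fixed in Definition \ref{def Hamiltonian} rather than free generators. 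Hence the braid relations involving the indices $0$ and $N+1$ cannot be quoted from the Gaudin picture and must be verified directly. My plan here is to substitute those composite expressions into the candidate braid brackets and check, by the commutation relation \eqref{eq com rel} and a finite but careful calculation, that $[\Omega_{i,j}, \Omega_{i,k}+\Omega_{k,j}]=0$ continues to hold when one or two of $i,j,k$ equal $0$ or $N+1$; the excerpt asserts this holds "by a straightforward calculation," and completing that verification for all the boundary index configurations is where the real work lies. Once the braid relations are confirmed for the full index range $0,\dots,N+1$, the partial-fractions grouping above applies verbatim and yields $[H_i,H_k]=0$ for all $i,k=1,\dots,N$, which is the assertion of Proposition \ref{prop commutativity}.
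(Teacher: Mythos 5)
Your proposal follows essentially the same route as the paper: reduce $[H_i,H_k]=0$ via partial fractions to the infinitesimal braid relations among the $\Omega_{i,j}$, obtain those relations for $1\le i,j\le N$ from Lemma \ref{lem A com} and the Gaudin/trace form \eqref{eq Omega}, and verify the cases involving the boundary indices $0$ and $N+1$ by direct computation, which is exactly the point the paper also defers to a ``straightforward calculation.'' You correctly isolate that boundary verification as the only step not covered by the quoted Gaudin mechanism, so the proposal is sound and matches the paper's argument.
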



\section{Schr\"odinger system}

 Denote by 
\begin{equation}\label{eq hamiltonian sch}
H_{i}\left(
{\bf q, \frac{\partial}{\partial  q},z}
\right)
\end{equation}
for $i=1,\ldots, N$,  the Hamiltonians obtained by substituting $q_m^{(i)}$ and $\partial /\partial q_m^{(i)}$ into 
$q_m^{(i)}$ and $p_m^{(i)}$, resepectively, 
of the Hamiltonians  $H_i$ defined in Definition \ref{def Hamiltonian}. 

We consider the following Schr\"odinger system: 
\begin{equation}\label{eq Schrodinger}
\kappa \frac{\partial }{\partial z_{i}}\Psi({\bf q}, {\bf z})=H_{i}\left(
{\bf q}, \frac{\partial}{\partial {\bf q}}, {\bf z}
\right)\Psi({\bf q}, {\bf z})\quad
\end{equation}
where $\ka\in \C$, $\Psi({\bf q}, {\bf z})$ is an unknown function of 
\begin{equation*}
{\bf q}=\left(q_1^{(1)},\ldots, q_{L-1}^{(1)}, q_1^{(2)},\ldots, q_{L-1}^{(2)},\ldots, q_1^{(N)},\ldots, q_{L-1}^{(N)}\right)
\end{equation*}
and ${\bf z}=(z_1,\ldots,z_N)$. Here, we regard $e_n$, $\ka_n$, $\theta_i$ as complex parameters.

\begin{prop}
The Schr\"odinger system \eqref{eq Schrodinger} is completely integrable in the sense of Frobenius, that is, 
it holds 
\begin{equation*}
\left[
\kappa \frac{\partial }{\partial z_{i}}-H_{i}\left(
{\bf q}, \frac{\partial}{\partial {\bf q}}, {\bf z}
\right),
\kappa \frac{\partial }{\partial z_{j}}-H_{j}\left(
{\bf q}, \frac{\partial}{\partial {\bf q}}, {\bf z}
\right)
\right]=0,  
\end{equation*}
 for $i,j=1,\ldots, N$. 
\end{prop}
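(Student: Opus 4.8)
The plan is to expand the commutator of the two first-order operators and split it into an "operator commutator'' piece, already handled by Proposition \ref{prop commutativity}, and a "symmetry of the $z$-dependence'' piece. Writing $L_i=\kappa\partial_{z_i}-H_i$ (with $H_i$ the differential operator \eqref{eq hamiltonian sch}), bilinearity of the bracket gives
\[
[L_i,L_j]=\kappa^2[\partial_{z_i},\partial_{z_j}]+\kappa\bigl(\partial_{z_j}H_i-\partial_{z_i}H_j\bigr)+[H_i,H_j].
\]
The first term vanishes since mixed partials commute, and in the middle term $\partial_{z_i}H_j$ denotes the operator obtained by differentiating the (rational in $z$) coefficients of $H_j$, because $[\partial_{z_i},H_j]\Psi=(\partial_{z_i}H_j)\Psi$ by the product rule. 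For the last term, note that the assignment $q_m^{(i)}\mapsto q_m^{(i)}$, $p_m^{(i)}\mapsto\partial/\partial q_m^{(i)}$ is a homomorphism of associative algebras (a representation of $W_{L,N}$), so it carries the commutativity of Proposition \ref{prop commutativity} to the differential operators, whence $[H_i,H_j]=0$. Thus everything reduces to the single symmetry identity $\partial_{z_i}H_j=\partial_{z_j}H_i$.

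To establish that identity I would pass to the variables $u_i=1/z_i$ introduced in Section 2, where $-z_i^2H_i=\tilde H_i$ with $\tilde H_i:=\sum_{j=0,\,j\neq i}^{N+1}\Omega_{i,j}/(u_i-u_j)$ and the $\Omega_{i,j}\in W_{L,N}$ independent of $u$. Using $\partial u_i/\partial z_i=-u_i^2$ and $H_i=-u_i^2\tilde H_i$, one finds for $i\neq j$ that $\partial_{z_i}H_j=u_i^2u_j^2\,\partial_{u_i}\tilde H_j$ and $\partial_{z_j}H_i=u_i^2u_j^2\,\partial_{u_j}\tilde H_i$, so the required identity is equivalent to the flatness-type relation $\partial_{u_i}\tilde H_j=\partial_{u_j}\tilde H_i$ of the associated Gaudin/KZ connection.

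The verification of this last relation is short. For $i\neq j$ in $\{1,\dots,N\}$, the frozen singular points $u_0=1$ and $u_{N+1}=0$ contribute $u_i$-independent terms, so the only $u_i$-dependence in $\tilde H_j$ comes from $\Omega_{j,i}/(u_j-u_i)$, giving $\partial_{u_i}\tilde H_j=\Omega_{j,i}/(u_i-u_j)^2$, and symmetrically $\partial_{u_j}\tilde H_i=\Omega_{i,j}/(u_i-u_j)^2$. These agree because $\Omega_{i,j}=\Omega_{j,i}$: by \eqref{eq Omega}--\eqref{eq A qp} we have $\Omega_{i,j}=\tfrac{1}{2}\tr(\widehat A^{(i)}\widehat A^{(j)})$, and since all generators carrying distinct superscripts $i\neq j$ commute, the cyclicity of the trace applies to these operator-valued matrices. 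Translating back through the factors $u_i^2u_j^2$ yields $\partial_{z_i}H_j=\partial_{z_j}H_i$, completing the proof.

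The main obstacle is really item in the second paragraph: recognizing the Gaudin structure hidden in the rational $z$-dependence and organizing the computation in the $u$-coordinates, together with the correct bookkeeping of the frozen singularities $u_0,u_{N+1}$ and the verification of the symmetry $\Omega_{i,j}=\Omega_{j,i}$ for noncommuting matrix entries. Once these structural points are in place, the flatness is exactly the standard Knizhnik--Zamolodchikov computation, driven by the infinitesimal braid relations that follow from Lemma \ref{lem A com}; indeed, one could alternatively verify $[\kappa\partial_{u_i}-\tilde H_i,\kappa\partial_{u_j}-\tilde H_j]=\kappa(\Omega_{i,j}-\Omega_{j,i})/(u_i-u_j)^2+[\tilde H_i,\tilde H_j]=0$ directly and transfer it to $[L_i,L_j]$ via $L_i=-u_i^2(\kappa\partial_{u_i}-\tilde H_i)$, noting that for $i\neq j$ the scalar $u_i^2$ commutes with $\kappa\partial_{u_j}-\tilde H_j$.
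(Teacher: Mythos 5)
Your proposal is correct and follows essentially the same route as the paper: split $[L_i,L_j]$ into $[H_i,H_j]=0$ (Proposition \ref{prop commutativity}) plus the symmetry $\partial_{z_i}H_j=\partial_{z_j}H_i$, and verify the latter by differentiating the single cross term and invoking the commutativity of entries with distinct superscripts from Lemma \ref{lem A com}. Your passage to the $u$-coordinates and the identity $\Omega_{i,j}=\Omega_{j,i}$ is just a repackaging of the paper's direct computation in the $z$-variables.
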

\begin{proof}
Thanks to Proposition \ref{prop commutativity}, we have only to show 
\begin{equation*}
\frac{\partial }{\partial z_{i}}H_{j}\left(
{\bf q}, \frac{\partial}{\partial {\bf q}}, {\bf z}
\right)=\frac{\partial }{\partial z_{j}}H_{i}\left(
{\bf q}, \frac{\partial}{\partial {\bf q}}, {\bf z}
\right). 
\end{equation*}

It is easily calculated as follows. For $i\neq j$, we have 
\begin{align*}
\frac{\partial }{\partial z_{i}}H_{j}\left(
{\bf q}, \frac{\partial}{\partial {\bf q}}, {\bf z}
\right)=&\frac{\partial }{\partial z_{i}}\left(
\frac{z_i}{z_j(z_j-z_i)}\left(\sum_{m,n=0}^{L-1}
q_m^{(j)}p_n^{(j)}q_n^{(i)}p_m^{(i)}-\theta_i\theta_j\right)
\right)
\\
=&\frac{1}{(z_i-z_j)^2}\left(\sum_{m,n=0}^{L-1}
q_m^{(j)}p_n^{(j)}q_n^{(i)}p_m^{(i)}-\theta_i\theta_j\right). 
\end{align*}
 From Lemma \ref{lem A com}, the last line is symmetrical with respect to $i$ and $j$. 
Thus, we finished the proof. 
\end{proof}

In the most simplest case, namely, the case of $L=2$ and $N=1$, the Schr\"odinger system \eqref{eq Schrodinger}
is the quantum sixth Painlev\'e equation. In the previous work \cite{N HGS}, we showed that the quantum 
sixth Painlev\'e equation has  polynomial solutions in terms of $q$.

In the general case, the Schr\"odinger system \eqref{eq Schrodinger} also has polynomial solutions 
in terms of ${\bf q}$ due to the following proposition.

For a  $L-1\times N$ matrix $A$ whose entries are non-negative integers, 
let $q^A$ be the monomial defined by 
\begin{equation*}
q^A=\prod_{m=1}^{L-1}\prod_{i=1}^N \left(
q_m^{(i)}
\right)^{A_{m,i}}, 
\end{equation*}
where $A_{m,i}$ is the ($m,i$) entry of the matrix $A$. Set $d(A)=\sum_{m=1}^{L-1}\sum_{i=1}^N A_{m,i}$. 

Let $V(M)$ ($M\in \Z_{\ge 0}$) be the subspace of the polynomial ring $\C[{\bf q}]$ such that 
the degree of elements in $V(M)$ is less than $M$, namely, $V(M)=\bigoplus_{A}\C q^A$, where 
the summation is taken over all $L-1\times N$ matrices $A$ such that $d(A)\le M$.

\begin{prop}\label{prop action H}
For each $i=1,\ldots, N$, 
the Hamiltonian $H_i({\bf q}, \partial /\partial {\bf q}, {\bf z})$  acts on $V(M)$ if 
$\ka_0-\sum_{i=1}^N\theta_i=M$. 
\end{prop}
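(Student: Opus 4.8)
The plan is to exploit the natural $\Z$-grading of the operator realization of $W_{L,N}$ in which, for $1\le m\le L-1$ and $1\le i\le N$, the multiplication operator $q_m^{(i)}$ has degree $+1$ and $p_m^{(i)}=\partial/\partial q_m^{(i)}$ has degree $-1$, while all the parameters $e_n,\ka_n,\theta_j$ have degree $0$. Since the relation $[p_m^{(i)},q_m^{(i)}]=1$ is homogeneous of degree $0$, this is a genuine $\Z$-grading of the algebra of differential operators in ${\bf q}$, compatible with $\C[{\bf q}]=\bigoplus_{d\ge 0}\C[{\bf q}]_d$ and with $V(M)=\bigoplus_{d\le M}\C[{\bf q}]_d$. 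Write $D=\sum_{i=1}^N\sum_{m=1}^{L-1}q_m^{(i)}p_m^{(i)}$ for the Euler operator, which acts on $\C[{\bf q}]_d$ as multiplication by $d$.

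First I would record the degrees of the composite symbols of Definition \ref{def Hamiltonian}. A glance at their defining formulas shows that $q_0^{(i)},p_0^{(i)},q_m^{(0)},p_m^{(0)},q_0^{(0)},p_0^{(0)}$ are all homogeneous of degree $0$; in particular $q_0^{(0)}=\ka_0-\sum_{i=1}^N\theta_i-D$. Hence each entry $q_m^{(i)}p_n^{(i)}$ of $\widehat{A}^{(i)}$ in \eqref{eq A qp} is homogeneous, of degree $0$ in general but of degree $+1$ when $m\ge 1,\,n=0$ and of degree $-1$ when $m=0,\,n\ge 1$.

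Next I would split $z_iH_i$ in \eqref{eq def Hamiltonian} into homogeneous pieces and check that only the degrees $-1,0,+1$ occur. The first term $\sum_n e_nq_n^{(i)}p_n^{(i)}$ and the final $\theta_i(\cdots)$ term are of degree $0$; in the sum over $j\neq i$ every summand $q_m^{(i)}p_n^{(i)}q_n^{(j)}p_m^{(j)}$ is of degree $0$; and in the double sum over $0\le m<n$ the only way a summand could have positive degree is $m\ge 1,\,n=0$, which is forbidden by $m<n$. The sole degree-raising contribution therefore comes from the term $\frac{1}{z_i-1}\sum_{m,n}q_m^{(i)}p_m^{(0)}q_n^{(0)}p_n^{(i)}$, and extracting from it the part with $m\ge 1,\ n=0$ (using $p_0^{(i)}=-1$) gives
\[
(z_iH_i)_{+1}=\frac{-1}{z_i-1}\sum_{m=1}^{L-1}q_m^{(i)}\,p_m^{(0)}\,q_0^{(0)}.
\]

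Finally I would apply this to a homogeneous $f\in\C[{\bf q}]_M$. Since $q_0^{(0)}$ stands on the right it acts first, and $q_0^{(0)}f=(\ka_0-\sum_i\theta_i-M)f$, which vanishes precisely under the hypothesis $\ka_0-\sum_i\theta_i=M$; thus $(z_iH_i)_{+1}$ annihilates $\C[{\bf q}]_M$. The pieces of degree $-1$ and $0$ preserve $V(M)$ automatically, and the degree-$(+1)$ piece sends $\C[{\bf q}]_d$ into $\C[{\bf q}]_{d+1}\subset V(M)$ for $d<M$ while killing $\C[{\bf q}]_M$; hence $z_iH_i$, and therefore $H_i=z_i^{-1}(z_iH_i)$, maps $V(M)$ into itself. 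I expect the only delicate point to be the degree bookkeeping for the composite symbols: one must confirm that substituting the expressions for $q_0^{(i)},p_m^{(0)},q_0^{(0)}$ produces no hidden degree-$(+1)$ terms and that the unique degree-raising term really factors through $q_0^{(0)}$ on the right, after which the conclusion is forced by the grading.
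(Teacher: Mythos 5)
Your argument is correct and is essentially the paper's own proof in cleaner packaging: the paper likewise isolates the unique degree-raising term $-\frac{1}{z_i-1}\sum_{n}q_n^{(i)}p_n^{(0)}q_0^{(0)}$ of $z_iH_i$ and observes that $q_0^{(0)}$ acts on a degree-$M$ monomial $q^A$ as the scalar $\ka_0-\sum_j\theta_j-M$, which vanishes under the hypothesis, all other terms having degree $\le 0$. Your systematic $\Z$-grading bookkeeping just makes explicit the check that the paper compresses into the phrase ``$f({\bf q})$ is a polynomial whose degree is equal to or less than $M$.''
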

\begin{proof}
We compute the action of the Hamiltonian $H_i({\bf q}, \partial /\partial {\bf q}, {\bf z})$  on $q^A$ such that $d(A)=M$ 
as follows. 
\begin{align}
z_i(z_i-1)H_{i}\left(
{\bf q}, \frac{\partial}{\partial {\bf q}}, {\bf z}
\right) q^A=&-\sum_{n=1}^{L-1}q_n^{(i)}q_0^{(0)}p_n^{(0)} q^A+f({\bf q})
\nonumber
\\
=&-\sum_{n=1}^{L-1}\left(
\ka_0-\sum_{i=1}^N\theta_i-M
\right)\left(
\ka_n+\sum_{i=1}^N A_{n,i}
\right)q_n^{(i)}q^A+f({\bf q}). \label{eq action H}
\end{align}
Here $f({\bf q})$ is a polynomial whose degree is equal to or less than $M$.  
  Hence, 
if $\ka_0-\sum_{i=1}^N\theta_i=M$, then the first term of \eqref{eq action H} vanishes, which finishes the proof. 
\end{proof}

By virtue of Proposition \ref{prop action H}, for the Schr\"odinger equation \eqref{eq Schrodinger}, 
we can consider polynomial solutions 
\begin{equation*}
\Psi({\bf q}, {\bf z})=\sum_{A\in\mathcal{A}_M}c_A({\bf z}) q^A, 
\end{equation*}
where 
\begin{equation}\label{def matrix A}
\mathcal{A}_M=\left\{
A=\left(A_{m,i}\right)\big| A_{m,i}\in\mathbb{Z}_{\ge 0},\ d(A)\le M
\right\}
\end{equation}
  and $c_A({\bf z})$ is a function of ${\bf z}$.  
 In the next section, we present integral formulas taking values in $V(M)$ and show that 
 they are solutions to the Schr\"odinger system \eqref{eq Schrodinger}.

 The Hamiltonians $H_i$ act on another subspaces of the polynomial ring $\C[{\bf q}]$. 
Let $F( T_{1},\ldots,T_{L-1})$ ($T_{1},\ldots, T_{L-1} \in\Z_{\ge 0}$) be the subspace 
of the polynomial ring $\C[{\bf q}]$ 
defined as 
$
F(T_1,\ldots, T_{L-1})=\bigoplus_{A}\C q^A, 
$
where the summation is taken over all $L-1\times N$ matrices $A$ such that the entries of $A$ are 
non-negative integers  and 
$\sum_{i=1}^N A_{m,i}\le T_m$  ($m=1,\ldots, L-1$). Set $d_m(A)=\sum_{i=1}^N A_{m,i}$. 

\begin{prop}
For each $i=1,\ldots, N$, 
the Hamiltonian $H_i({\bf q}, \partial /\partial {\bf q}, {\bf z})$  acts on $F(T_1,\ldots, T_{L-1})$ if 
$\ka_m=-T_m$. 
\end{prop}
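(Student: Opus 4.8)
The plan is to mirror the proof of Proposition \ref{prop action H}, but to track the finer collection of \emph{row degrees} $d_m(A)=\sum_{i=1}^N A_{m,i}$ ($1\le m\le L-1$) rather than the single total degree $d(A)$. The starting observation is that among the entries appearing in Definition \ref{def Hamiltonian}, the only operators that genuinely change any $d_m$ are the true creation/annihilation operators $q_m^{(i)}$ and $p_m^{(i)}$ with $1\le m\le L-1$: the former raises $d_m$ by one, the latter lowers it. All of the boundary expressions $p_0^{(i)}=-1$, $q_m^{(0)}=-1$, $p_0^{(0)}=-1$ are constants, while $q_0^{(i)}=\theta_i+\sum_m q_m^{(i)}p_m^{(i)}$, $q_0^{(0)}$, and $p_m^{(0)}=\kappa_m+\sum_{i}q_m^{(i)}p_m^{(i)}$ are number operators that act on any monomial $q^A$ as scalars and preserve every $d_m$. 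In particular $p_m^{(0)}q^A=(\kappa_m+d_m(A))\,q^A$, which is the fact that will carry the whole argument.

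First I would go through the summands of $z_iH_i$ in \eqref{eq def Hamiltonian} one at a time and record their effect on the row degrees. The $e_n$-sum consists of number operators and so preserves every $d_m$. In the second sum each summand $q_m^{(i)}p_m^{(j)}q_n^{(j)}p_n^{(i)}$ raises and lowers row $m$ once each (via $q_m^{(i)}$ and $p_m^{(j)}$) and likewise for row $n$, so it preserves every $d_m$; the boundary cases $m=0$ or $n=0$ only insert number operators or constants and do not change this. The same cancellation occurs in the fourth sum $q_m^{(i)}p_n^{(i)}q_n^{(j)}p_m^{(j)}$, where row $m$ is raised by $q_m^{(i)}$ and lowered by $p_m^{(j)}$ while row $n$ is raised by $q_n^{(j)}$ and lowered by $p_n^{(i)}$. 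The final summand is a scalar. Hence the only summand that can raise some $d_m$ is the third one, $\frac{1}{z_i-1}\sum_{m,n=0}^{L-1}q_m^{(i)}p_m^{(0)}q_n^{(0)}p_n^{(i)}$.

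Next I would analyze that third sum for a fixed target row $m$ acting on a monomial $q^A$ with $d_m(A)=T_m$. Writing the summation indices as $a,b$, the factor $q_a^{(i)}$ raises $d_m$ only when $a=m$, and the factor $p_b^{(i)}$ lowers $d_m$ only when $b=m$, while $p_a^{(0)}$ and $q_b^{(0)}$ never change a row degree. Thus $d_m$ strictly increases precisely for the terms with $a=m$ and $b\neq m$. For such a term, reading $q_m^{(i)}p_m^{(0)}q_b^{(0)}p_b^{(i)}$ from right to left, the factors $p_b^{(i)}$ and $q_b^{(0)}$ leave row $m$ untouched (they act on row $b\neq m$ or are constants), so by the time $p_m^{(0)}$ acts the row-$m$ degree is still $T_m$ and $p_m^{(0)}$ contributes the scalar $\kappa_m+T_m$; only then does $q_m^{(i)}$ push $d_m$ to $T_m+1$. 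Therefore every monomial produced with row degree exceeding $T_m$ carries the factor $\kappa_m+T_m$, which vanishes under the hypothesis $\kappa_m=-T_m$. Combining this with the previous paragraph, $H_i$ sends any $q^A$ with $d_m(A)\le T_m$ for all $m$ to a polynomial satisfying the same bounds, i.e.\ it preserves $F(T_1,\ldots,T_{L-1})$.

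The only delicate point is the bookkeeping in the second and third paragraphs: one must keep careful track of how each of the four operator factors in the second, third and fourth sums shifts the exponents $A_{m,i}$ between the different variables $i$, and in particular handle the boundary indices $0$ correctly, so as to confirm that no summand other than those identified in the third sum can increase a row degree. Once this accounting is in place, the vanishing of the coefficient $\kappa_m+T_m$ is immediate and the proof closes exactly as in Proposition \ref{prop action H}.
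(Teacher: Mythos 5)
Your overall strategy---tracking the row degrees $d_m$ and showing that every term which can push some $d_m$ above $T_m$ carries the scalar $p_m^{(0)}q^A=(\kappa_m+d_m(A))q^A=(\kappa_m+T_m)q^A$---is the right one and is exactly what the paper does. However, the bookkeeping in your second paragraph contains a concrete error: the claim that the second sum in \eqref{eq def Hamiltonian} preserves every row degree is false. You checked the boundary cases $m=0$ and $n=0$ but not $j=0$. For $j=0$ the summand is $q_m^{(i)}p_m^{(0)}q_n^{(0)}p_n^{(i)}$ with $1\le m<n\le L-1$; here $p_m^{(0)}=\kappa_m+\sum_i q_m^{(i)}p_m^{(i)}$ is a number operator, not an annihilation operator, and $q_n^{(0)}=-1$ is a constant, not a creation operator. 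So this summand raises $d_m$ by one (via $q_m^{(i)}$, uncompensated) while lowering $d_n$. Consequently your assertion that ``the only summand that can raise some $d_m$ is the third one'' is wrong, and your list of dangerous terms is incomplete: you are missing precisely the contribution $\sum_{n=m+1}^{L-1}q_m^{(i)}p_m^{(0)}p_n^{(i)}$, which appears alongside the third-sum terms in the paper's own computation \eqref{eq action H 2}.

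The gap is repairable by the mechanism you already use: these extra dangerous terms also have $p_m^{(0)}$ sitting next to $q_m^{(i)}$, with every factor further to the right acting only on rows $n\neq m$ or being constant, so on a monomial with $d_m(A)=T_m$ they too acquire the factor $\kappa_m+T_m$ and vanish under the hypothesis $\kappa_m=-T_m$. Once you add them to the list of terms analyzed in your third paragraph, your argument closes and coincides with the paper's proof.
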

\begin{proof} 
Take a $L-1\times N$ matrix $A$ such that the entries of $A$ are 
non-negative integers  and $d_m(A)=T_m$ for any $m\in\{1,\ldots, L-1\}$. 
The Hamiltonian $H_i({\bf q}, \partial /\partial {\bf q}, {\bf z})$  acts on $q^A$ 
as follows:
\begin{align}
z_i(z_i-1)H_{i}\left(
{\bf q}, \frac{\partial}{\partial {\bf q}}, {\bf z}
\right) q^A=&-\left(q_m^{(i)}q_0^{(0)}p_m^{(0)} 
+\sum_{n=m+1}^{L-1}q_m^{(i)}p_m^{(0)}p_n^{(i)} 
+\sum_{n=1,\atop n\neq m}^{L-1}q_m^{(i)}p_m^{(0)}p_n^{(i)}\right)q^A
+f({\bf q})
\nonumber
\\
=&-\left(
\ka_m+T_m
\right)
\left\{
\ka_0-\sum_{j=1}^N\theta_j-d(A)
+\sum_{n=m+1}^{L-1}p_n^{(i)} 
+\sum_{n=1,\atop n\neq m}^{L-1}p_n^{(i)} \right\}q_m^{(i)}q^A
+f({\bf q}). \label{eq action H 2}
\end{align}
Here, $f({\bf q})$ is a polynomial such that  as a polynomial in terms of $q_m^{(1)},\ldots, q_m^{(N)}$, 
the degree of $f({\bf q})$ is equal to or less than $d_m(A)$. Thus, if $\ka_m=-T_m$, then  
the first term of \eqref{eq action H 2} vanishes, which finishes the proof. 
 \end{proof}

Consequently, we can also consider polynomial solutions taking values in $F(T_1,\ldots, T_{L-1})$.


\section{Integral formula}

In this section, we construct integral formulas for the Schr\"odinger systems \eqref{eq Schrodinger}, 
as particular solutions. 

Recall that the Gauss hypergeometric function is a particular solution to both the classical and 
quantum sixth Painlev\'e equation \cite{N HGS}. 
Hypergeometric solutions to the classical Hamiltonian systems $\mathcal{H}_{L,N}$  were given by 
T.~Suzuki ($L\ge 2$, $N=1$) \cite{S2} and T.~Tsuda ($L\ge 2$, $N\ge 1$) \cite{T1}   
independently, under the condition $\ka_0-\sum_{i=1}^N\theta_i=0$.   

These hypergeometric solutions are the generalized hypergeometric functions 
(Thomae's hypergeometric function) $_LF_{L-1}$ in the case of ($L\ge 2$, $N=1$) and 
their generalizations in the case of ($L\ge 2$, $N\ge 1$). 

We expect that these generalized hypergeometric functions are also solutions to a quantization 
of the classical Hamiltonian systems $\mathcal{H}_{L,N}$, 
the Schr\"odinger systems \eqref{eq Schrodinger}. 
Indeed, this is true if we consider polynomial solutions to the Schr\"odinger systems \eqref{eq Schrodinger} with 
$   \ka_0-\sum_{i=1}^N\theta_i=1$. 

Set $\ka_0-\sum_{i=1}^N\theta_i=M\in\mathbb{Z}_{\ge 0}$. 
We begin with the case $M=1$ and later we deal with general case.

\subsection{The case of $M=1$}
%

Consider a multivalued function 
\begin{equation*}
U(t)=\prod_{n=1}^{L-1}t_n^{\alpha_n/\ka}
\prod_{i=1}^N\left(
1-z_it_{L-1}
\right)^{-\beta_i/\ka}\prod_{n=1}^{L-1}\left(
t_{n-1}-t_n
\right)^{-\gamma_n/\ka}
\end{equation*}
with $t_0=1$ defined on the complement $T\in\C^{(L-1)}$ of  singular locus $D$ given by 
\begin{equation*}
D=
\bigcup_{ 1\le n\le L-1}\left\{
t_{n-1}=t_{n}
\right\}
\cup
\bigcup_{ 1\le n\le L-1}
\left\{
t_n= 0
\right\}
   \cup
   \bigcup_{ 1\le i\le N}\left\{
    t_{L-1}= 1/z_i
    \right\}
    . 
\end{equation*}

Let $\mathcal{S}$ be the rank one local system  determined by $U(t)$ and 
 $\mathcal{S}^*$, the dual local system of $\mathcal{S}$. 
The hypergeometric paring between the twisted homology group and  twisted de Rham cohomology group 
 is 
\begin{align*}
H_{L-1}(T, \mathcal{S}^*)\times H^{L-1}(T,\nabla)&\longrightarrow \C
\\
\left(
\Delta, \varphi
\right)&\longmapsto \int_\Delta U(t)\varphi, 
\end{align*}
where $\varphi$ is a rational ($L-1$)-form holomorphic outside $D$ and $\nabla$ 
is the covariant differential operator given by 
$\nabla=d+d\log(U(t))\wedge$.

According to 
\cite{T2},   the following rational ($L-1$)-forms
\begin{equation}\label{eq basis phi}
\varphi_0(t)=\frac{dt_1\wedge\cdots\wedge dt_{L-1}}{t_{L-1}}
\prod_{n=1}^{L-1}\frac{1}{t_{n-1}-t_n}, 
\quad \varphi^{(i)}_n(t)=\frac{dt_1\wedge\cdots\wedge dt_{L-1}}{(1-z_it_{L-1})t_{L-1}}\prod_{m=1,\atop m\neq n}^{L-1}
\frac{1}{t_{m-1}-t_m}
\end{equation}
represent a basis of $H^{L-1}(T,\nabla)$.  

Define the integral formula $\Psi_1({\bf q,z})$ by 
\begin{equation*}
\Psi_1({\bf q,z})=\int_\Delta U(t)\left(\varphi_0(t)-\sum_{i=1}^N\sum_{n=1}^{L-1}\varphi_n^{(i)}(t)q_n^{(i)}\right)
\end{equation*}
with $\Delta\in H_{L-1}(T, \mathcal{S}^*)$. From Proposition \ref{prop action H}, when $\ka_0-\sum_{i=1}^N\theta_i=1$, the action of Hamiltonian 
$H_i$ ($i=1,\ldots, N$) on the integral formula, 
$H_i\Psi_1({\bf q,z})$, is also 
a polynomial of degree equal to or less than $1$ and then the constant term and the coefficient of $q_n^{(j)}$ ($1\le n\le L-1$, $1\le j\le N$) of $H_i\Psi_1({\bf q,z})$ 
are 
linear combinations of $\int_\Delta U(t)\varphi_0(t)$ and $\int_\Delta U(t)\varphi_n^{(j)}(t)$ ($1\le n\le L-1$, $1\le j\le N$). 
Remarkably they coincide with $\ka \partial \varphi_0(t)/ \partial z_i$ and $\ka \partial \varphi_n^{(j)}(t)/\partial z_i$ 
with appropriate correspondence between parameters. 
Namely, we have
\begin{thm}\label{thm M=1}
If $\ka_0-\sum_{i=1}^N\theta_i=1$, then the integral formula $\Psi_1({\bf q,z})$ is a solution to the 
Schr\"odinger system \eqref{eq Schrodinger}, with 
\begin{equation*}
\alpha_n=e_{n+1}-e_n+\ka_{n+1},\quad \beta_i=-\theta_i, 
\quad \gamma_n=\ka_n,
\end{equation*}
for $1\le n\le L-1$ and $1\le i\le N$, where $e_L=e_0$ and $\ka_{L}=1$. 
\end{thm}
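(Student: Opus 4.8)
The plan is to reduce the statement to a purely cohomological identity on the twisted de Rham complex $H^{L-1}(T,\nabla)$ for the local system $\mathcal{S}$. The key observation is that the integrand $\Psi_1(\mathbf{q},\mathbf{z})$ is linear in $\mathbf{q}$, so $H_i\Psi_1$ is again a polynomial of degree $\le 1$ in $\mathbf{q}$ (by Proposition \ref{prop action H} with $M=1$), and therefore the Schr\"odinger equation $\kappa\,\partial_{z_i}\Psi_1 = H_i\Psi_1$ is equivalent to matching coefficients of the monomials $1$ and $q_n^{(j)}$ separately. Since every such coefficient is, by construction, a pairing $\int_\Delta U(t)\,\psi$ for some rational $(L-1)$-form $\psi$ built from $\varphi_0$ and the $\varphi_n^{(j)}$, the equation will follow from a collection of cohomology relations of the shape
\begin{equation*}
\kappa\,\frac{\partial\varphi}{\partial z_i} \equiv (\text{explicit linear combination of basis forms}) \pmod{\nabla\, \Omega^{L-2}(T)},
\end{equation*}
together with the rule $\int_\Delta U(t)\,\nabla\eta = 0$, which holds because $\Delta$ is a twisted cycle. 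So the whole problem is moved from analysis/differential equations into linear algebra on a finite-dimensional cohomology space.

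First I would fix the parameter dictionary $\alpha_n = e_{n+1}-e_n+\kappa_{n+1}$, $\beta_i=-\theta_i$, $\gamma_n=\kappa_n$ and compute $d\log U(t)$ explicitly as a $1$-form in the $t_n$; this produces the connection $\nabla$ and hence the Gauss--Manin-type relations among the $\varphi_0,\varphi_n^{(j)}$. Next I would differentiate $U(t)$ with respect to $z_i$: since $z_i$ enters only through the factor $(1-z_i t_{L-1})^{-\beta_i/\kappa}$, one gets
\begin{equation*}
\kappa\,\frac{\partial}{\partial z_i}\bigl(U(t)\,\varphi\bigr) = U(t)\left(\kappa\,\frac{\partial\varphi}{\partial z_i} - \beta_i\,\frac{t_{L-1}}{1-z_i t_{L-1}}\,\varphi\right),
\end{equation*}
which is exactly the combination entering the hypergeometric pairing; this is why the $z_i$-derivatives of the integrals land back inside $H^{L-1}(T,\nabla)$. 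The third, and most laborious, step is to compute the action of $H_i(\mathbf{q},\partial/\partial\mathbf{q},\mathbf{z})$ on $\Psi_1$ directly from Definition \ref{def Hamiltonian}, reading off the coefficient of $1$ and of each $q_n^{(j)}$ as explicit $\C(\mathbf{z})$-linear combinations of $\int_\Delta U\varphi_0$ and $\int_\Delta U\varphi_n^{(j)}$. Finally I would verify, form by form in cohomology, that these two computed vectors of coefficients agree.

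The main obstacle I anticipate is the third step: establishing that the purely algebraic action of the quantum Hamiltonian on the $q$-linear ansatz reproduces precisely the $z_i$-derivative of the forms modulo $\nabla$-exact terms. This is essentially the claim flagged in the text as \emph{remarkable}, namely that the coefficient vectors coincide ``with appropriate correspondence between parameters.'' Carrying it out requires (i) finding the explicit primitives $\eta_{n}^{(i)}\in\Omega^{L-2}(T)$ that witness each relation $\kappa\,\partial_{z_i}\varphi \equiv \sum (\cdots)\varphi + \nabla\eta$, and (ii) checking that the $\hbar$-dependent correction terms produced by reordering the noncommutative generators $q_m^{(i)},p_m^{(i)}$ in $H_i$ are either absorbed into the parameter shifts (note $\gamma_n=\kappa_n$ and $\alpha_n$ involve $\kappa_{n+1}$, while Example \ref{ex FS} shows $\hbar$-shifts like $\alpha_{2m-1}=\kappa_n-\hbar$) or else vanish upon integration against the twisted cycle. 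Because the parameter identification already encodes these quantum shifts, I expect the verification to close, but the bookkeeping of operator orderings against the cohomological boundary terms is where the real work lies; a clean way to organize it would be to leverage the $\mathfrak{sl}_L$-structure hinted at by the Knizhnik--Zamolodchikov analogy, treating the integral variables $t_n$ as attached to the simple roots so that the Hamiltonian action becomes a contiguity (Gauss--Manin) operator on the hypergeometric integral.
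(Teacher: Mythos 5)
Your proposal follows essentially the same route as the paper: reduce the Schr\"odinger equation to a coefficient-by-coefficient identity between the linear Pfaffian system satisfied by $\int_\Delta U\varphi_0$ and $\int_\Delta U\varphi_n^{(j)}$ (computed modulo $\nabla$-exact forms, which vanish against the twisted cycle $\Delta$) and the action of $H_i$ on the degree-one ansatz, with Proposition \ref{prop action H} guaranteeing the degree is preserved. The paper asserts this coincidence for $M=1$ without writing out the coefficient comparison --- the explicit coboundary bookkeeping you correctly identify as the laborious step is carried out only in the general-$M$ proof of Theorem \ref{thm IF} --- so your plan is the intended argument.
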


\subsection{The case of $M\ge 2$}

Fix $M\in\Z_{\ge 2}$. 
We consider a multivalued function
\begin{align*}
U(t)=& \prod_{1\le a<b\le M, \atop 1\le n\le L-1}\left(t_n^{(a)}-t_n^{(b)}\right)^{2/\kappa}
\prod_{ 1\le a, b\le M,\atop 1\le n\le L-2 }
\left(t_n^{(a)}-t_{n+1}^{(b)}\right)^{-1/\kappa}
\\
&\times \prod_{a=1}^M\left\{\prod_{n=1}^{L-1}\left(
t_n^{(a)}
\right)^{\alpha_n/\ka}
\prod_{i=1}^N\left(1-z_it^{(a)}_{L-1}\right)^{-\beta_i/\ka}\left(
1-t_1^{(a)}
\right)^{-\gamma/\ka}\right\}
\end{align*}
 defined on the complement $T\in\C^{(L-1)M}$ of  singular locus $D$ given by 
\begin{equation*}
D=\bigcup_{1\le a<b\le M,\atop 1\le n\le L-1} \left\{t^{(a)}_n= t^{(b)}_n\right\}
\cup
\bigcup_{1\le a, b\le M,\atop 1\le n\le L-2}\left\{
t^{(a)}_n=t^{(b)}_{n+1}
\right\}
\cup
\bigcup_{1\le a\le M,\atop 1\le n\le L-1}
\left\{
t^{(a)}_n= 0
\right\}
   \cup
   \bigcup_{1\le a\le M,\atop 1\le i\le N}\left\{
    t^{(a)}_{L-1}= 1/z_i
    \right\}
    \cup
\bigcup_{1\le a\le M}
  \left\{
   t^{(a)}_{1}= 1
   \right\} . 
\end{equation*}

Let $\mathcal{S}$ be the rank one local system  determined by $U(t)$ and 
 $\mathcal{S}^*$, the dual local system of $\mathcal{S}$. 
The hypergeometric paring between the twisted homology group and  twisted de Rham cohomology group 
 is 
\begin{align*}
H_{(L-1)M}(T, \mathcal{S}^*)\times H^{(L-1)M}(T,\nabla)&\longrightarrow \C
\\
\left(
\Delta, \varphi
\right)&\longmapsto \int_\Delta U(t)\varphi, 
\end{align*}
where $\varphi$ is a rational $(L-1)M$-form holomorphic outside $D$ and $\nabla$ 
is the covariant differential operator given by 
$\nabla=d+d\log(U(t))\wedge$.

  Denote by $\frak{S}_M^{L-1}$, ($L-1$)-th products of the symmetric group with the degree $M$. 
Let the action of $\frak{S}_M^{L-1}$ on a rational function $f(t)$ of variables $t=(t_1^{(1)},\ldots,t_{L-1}^{(1)},\ldots, 
t_1^{(M)},\ldots,t_{L-1}^{(M)})$ be defined by 
\begin{equation}\label{eq sigma}
\sigma(f(t))=f(t_1^{(\sigma_1(1))},\ldots,t_{L-1}^{(\sigma_{L-1}(1))},
\ldots, t_1^{(\sigma_1(M))},\ldots,t_{L-1}^{(\sigma_{L-1}(M))})
\end{equation}
for $\sigma=(\sigma_1,\ldots,\sigma_{L-1})\in\frak{S}_M^{L-1}$. Let $\mathrm{Sym}[f(t)]$ be the symmetrization 
of $f(t)$, given  by $\mathrm{Sym}[f(t)]=\sum_{\sigma\in\frak{S}_M^{L-1}}\sigma(f(t))$.

\begin{dfn}\label{def integral formula}
For $M\in\mathbb{Z}_{\ge 2}$, 
we define an integral formula by 
\begin{equation*}
\Psi_M\left(
{\bf q,z}
\right)=
\int_\Delta U(t)\cdot\mathrm{Sym}\left[
\prod_{a=1}^M
 \left(f_0(t^{(a)})-\sum_{i=1}^N\sum_{n=1}^{L-1}
 f_n^{(i)}(t^{(a)})q_n^{(i)}\right)\right]dt, 
\end{equation*}
where $\Delta\in H_{(L-1)M}(T, \mathcal{S}^*)$ and 
\begin{align*}
&f_0(t^{(a)})=\prod_{m=1}^{L-1}\frac{1}{t^{(a)}_{m-1}-t^{(a)}_m}, \quad f^{(i)}_n(t^{(a)})=\frac{1}{1-z_it^{(a)}_{L-1}}\prod_{m=1,\atop m\neq n}^{L-1}\frac{1}{t^{(a)}_{m-1}-t^{(a)}_m},
\quad t_0^{(a)}=1,
\\
&dt=dt^{(1)}_1\wedge\cdots \wedge dt^{(1)}_{L-1}\wedge dt^{(2)}_{1}\wedge\cdots \wedge dt^{(2)}_{L-1}\wedge
\cdots\wedge dt^{(m)}_{1}\wedge \cdots \wedge
dt^{(m)}_{L-1}.
\end{align*}

\end{dfn}

\begin{thm}\label{thm IF}
If $\ka_0-\sum_{i=1}^N\theta_i=M$ and $\ka_n=1$ ($2\le n\le L-1$), then  
the integral formula $\Psi_M({\bf q,z})$ is a solution to the Schr\"odinger system \eqref{eq Schrodinger},  with  
\begin{equation*}
\alpha_n=e_{n+1}-e_n+1,\quad
\beta_i=-\theta_i, \quad \gamma=\ka_1+M-1,
\end{equation*}
for $1\le n\le L-1$ and $1\le i\le N$, where $e_L=e_0$. 
\end{thm}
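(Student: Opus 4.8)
The plan is to mirror the argument for $M=1$ in Theorem \ref{thm M=1}, upgrading it to the $M$ coupled copies of integration variables. Since the hypothesis gives $\ka_0-\sum_{i=1}^N\theta_i=M$, Proposition \ref{prop action H} ensures that $H_i({\bf q},\partial/\partial{\bf q},{\bf z})\Psi_M({\bf q,z})$ is a polynomial in ${\bf q}$ of degree at most $M$; the same is trivially true of $\ka\,\partial\Psi_M/\partial z_i$. It therefore suffices to verify the Schr\"odinger equation \eqref{eq Schrodinger} coefficient by coefficient: for each $A\in\mathcal{A}_M$ one must match the coefficient of $q^A$ on the two sides. Expanding $\prod_{a=1}^M\bigl(f_0(t^{(a)})-\sum_{i,n}f_n^{(i)}(t^{(a)})q_n^{(i)}\bigr)$ and symmetrizing, each coefficient is exhibited as a hypergeometric pairing $\int_\Delta U(t)\,\omega_A\,dt$ with $\omega_A$ an explicit $\mathfrak{S}_M^{L-1}$-symmetrized rational $(L-1)M$-form assembled from $f_0$ and the $f_n^{(i)}$. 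The claim is thereby reduced to a finite family of identities among such integrals.

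Each of these identities I would recast as an equality of classes in the twisted de Rham cohomology $H^{(L-1)M}(T,\nabla)$. Differentiating under the integral sign, the left-hand side becomes $\int_\Delta U(t)\bigl(\ka\,\partial\omega_A/\partial z_i+\ka\,(\partial\log U/\partial z_i)\,\omega_A\bigr)\,dt$, where the $z_i$-dependence of $U$ enters only through the factors $(1-z_it_{L-1}^{(a)})^{-\beta_i/\ka}$, producing, with $\beta_i=-\theta_i$, the terms linear in $\theta_i$ on the right-hand side, while $\ka\,\partial\omega_A/\partial z_i$ collects the $z_i$-derivatives of the factors $f_n^{(i)}$ occurring in $\omega_A$. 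The right-hand side is computed by letting the multiplications $q_n^{(i)}$ and the derivations $p_n^{(i)}=\partial/\partial q_n^{(i)}$ of the explicit Hamiltonian \eqref{eq def Hamiltonian} act on the expanded integrand, which again yields a symmetrized rational form. Matching the two then amounts to showing that the difference of the resulting forms is $\nabla$-exact. Since $\Delta$ is a skew-symmetric twisted cycle, $\int_\Delta U(t)\,\nabla\eta=\int_\Delta d\bigl(U(t)\eta\bigr)=0$ for every rational $((L-1)M-1)$-form $\eta$, so it is enough to produce the primitives $\eta$ and evaluate $\nabla\eta=d\eta+d\log U\wedge\eta$ explicitly.

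The new feature relative to $M=1$ is the coupling factor $\prod_{a<b,\,n}(t_n^{(a)}-t_n^{(b)})^{2/\ka}\prod_{a,b,\,n}(t_n^{(a)}-t_{n+1}^{(b)})^{-1/\ka}$. Its diagonal part ($a=b$) contributes the self-adjacent exponents $-1/\ka$, which---compared with the correspondence $\gamma_n=\ka_n$ of Theorem \ref{thm M=1}---forces $\ka_n=1$ for $2\le n\le L-1$, while the separate factor $(1-t_1^{(a)})^{-\gamma/\ka}$ carries the shifted exponent $\gamma=\ka_1+M-1$; together with $\alpha_n=e_{n+1}-e_n+1$ and $\beta_i=-\theta_i$ these are precisely the exponent values that keep the $M$-copy integrand compatible with the single-copy building block. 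The off-diagonal part ($a\neq b$) feeds the cross-terms $\frac{2/\ka}{t_n^{(a)}-t_n^{(b)}}$ and $\frac{-1/\ka}{t_n^{(a)}-t_{n+1}^{(b)}}$ into $d\log U$; I expect these to reproduce, after symmetrization, exactly the two-body pieces of the Hamiltonian, namely $\sum_{m<n}q_m^{(i)}p_m^{(j)}q_n^{(j)}p_n^{(i)}$ and $\frac{z_j}{z_i-z_j}\sum_{m,n}q_m^{(i)}p_n^{(i)}q_n^{(j)}p_m^{(j)}$ in \eqref{eq def Hamiltonian}, while the diagonal contributions reproduce the single-copy identities of Theorem \ref{thm M=1}.

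The step I expect to be the main obstacle is controlling these interaction terms during integration by parts. A naive copy-by-copy reduction fails because $d\log U$ entangles distinct copies, so the required primitive $\eta$ is not a mere sum of single-copy primitives. The crux is to show that after applying $\mathrm{Sym}$ the entangled contributions reorganize so that the parts antisymmetric in $(t_n^{(a)}-t_n^{(b)})$ cancel while the remainder assembles into the two-body Hamiltonian terms above. Making this cancellation-and-reassembly precise, uniformly in $A\in\mathcal{A}_M$, is where the real work lies; the remaining verification is the routine though lengthy bookkeeping of rational forms modulo $\nabla$-exact ones.
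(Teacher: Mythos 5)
Your overall strategy coincides with the paper's: expand $\Psi_M$ coefficient-wise in $q^A$, reduce the Schr\"odinger equation to a family of identities between twisted-cycle integrals, and discharge the mismatch by exhibiting $\nabla$-exact forms that pair to zero against $\Delta$. The parameter identifications you motivate ($\ka_n=1$ forced by the exponents $-1/\ka$ on $t_n^{(a)}-t_{n+1}^{(b)}$, $\gamma=\ka_1+M-1$, $\beta_i=-\theta_i$) are the correct ones. However, the proposal stops exactly where the proof actually happens: you state that making the cancellation-and-reassembly of the inter-copy contributions precise is where the real work lies, but you supply no mechanism for it, and that mechanism is not a routine bookkeeping exercise.

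The paper's argument rests on two specific ingredients that do not follow from the $M=1$ case. First, the coboundaries are not generic primitives: they are the particular forms $X_n^{(i)}$ of \eqref{eq coboundary}, built from $t_m^{(S_n^{(i)})}\bar{\varphi}_A(t)\,{*dt}_m^{(\sigma_m(S_n^{(i)}))}$ summed over $m\ge n$, and the comparison with the Hamiltonian only closes for the weighted combination $\ka z_i\nabla_i\varphi_A(t)-\sum_{n}A_{n,i}X_n^{(i)}$ in \eqref{eq thm final result}. Second, the resulting interaction sums $Y_n^{(i)}$ of \eqref{eq Y Z} are evaluated in Lemmas \ref{lem l<n Y}--\ref{lem l=0 Y} by a telescoping induction (the identities \eqref{eq l<n claim} and \eqref{eq n<l claim}) in which, at each step, one interchanges $t_k^{(1)}$ with $t_k^{(2)}$ inside $\mathrm{Sym}$ to collapse the three-term kernel $\frac{-1}{t_{m}^{(1)}-t_{m-1}^{(2)}}+\frac{2}{t_{m}^{(1)}-t_{m}^{(2)}}+\frac{-1}{t_{m}^{(1)}-t_{m+1}^{(2)}}$ into a product of two simple poles, propagating the interaction up to the single variable $t_{L-1}$, where the partial-fraction identity \eqref{Jacobi identity} converts it into the $\frac{z_j}{z_i-z_j}$ two-body structure of \eqref{eq def Hamiltonian}; the genuinely antisymmetric pieces vanish under $\mathrm{Sym}$ as in Lemma \ref{lem n=l Y}. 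Without these two ingredients the reduction of $\ka\nabla_i\varphi_A(t)$ to the span of $\{\varphi_B(t)\mid B\in\mathcal{A}_M\}$ is not established, so what you have is a correct plan with the decisive step missing rather than a proof.
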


For $A\in\mathcal{A}_M$, 
let $\varphi_A(t)$ be the rational $(L-1)M$-form  holomorphic outside $D$ 
defined by 
\begin{equation*}
 \varphi_{A}(t)
=\mathrm{Sym}\left[
(-1)^{M-A_0}\begin{pmatrix}
M\\
A
\end{pmatrix}
\prod_{i=1}^N\prod_{n=1}^{L-1}
 \prod_{a=S_{n-1}^{(i)}+1}^{S_n^{(i)}}
f^{(i)}_n\left(
t^{(a)}
\right)\prod_{a=M-A_0+1}^Mf_0\left(
t^{(a)}
\right)\right]dt,
\end{equation*}
where 
\begin{align*}
&A_0=M-\sum_{1\le i\le N, \atop 1\le n\le L-1}A_{n,i}, \quad \begin{pmatrix}
M\\
A
\end{pmatrix}
=\frac{M!}{A_0!\prod_{1\le i\le N, \atop 1\le n\le L-1}A_{n,i}!}, 
\quad
S_n^{(i)}=\sum_{j=1}^{i-1}\sum_{m=1}^{L-1}A_{m,j}+\sum_{m=1}^nA_{m,i}.
\end{align*}
Then, 
 the integral formula  is expressed as  
\begin{equation*}
\Psi_M\left(
{\bf q,z}
\right)=\sum_{A\in\mathcal{A}_M}q^A\int_\Delta U(t) \varphi_A(t). 
\end{equation*}

Since, in general,  it holds for an $(L-1)M$-form $\varphi$ that
\begin{equation*}
\frac{\partial}{\partial z_i}\int_\Delta U \varphi=\int_\Delta U \left(
\frac{1}{U}\frac{\partial U}{\partial z_i}\varphi+\frac{\partial \varphi}{\partial z_i}
\right), 
\end{equation*}
let a linear operator $\nabla_i$ ($i=1,\ldots,N$) acting on 
$\varphi$ be defined as
\begin{equation*}
\nabla_i\varphi=\frac{1}{U}\frac{\partial  U}{\partial z_i}\varphi+\frac{\partial \varphi}{\partial z_i}.
\end{equation*}

Let us explain our proof of Theorem \ref{thm IF} briefly. 
 We compute $\ka\nabla_i\varphi_A(t)$ and obtain the  
 linear Pfaffian system for $\{\int_\Delta U \varphi_A(t)|A\in\mathcal{A}_M\}$. 
While we compute the action of the Hamiltonians $H_i$ on $q^A$ 
and obtain the coefficient of $q^A$ of $H_i\Psi_M({\bf q,z})$ 
as a linear combination of elements of $\{\int_\Delta U \varphi_A(t)|A\in\mathcal{A}_M\}$. 
Finally, comparing 
 both results, we obtain Theorem \ref{thm IF}.

{\bf A proof of Theorem \ref{thm IF}}

Fix $i\in\{1,\ldots, N\}$ and $A\in\mathcal{A}_M$. We compute $\nabla_i \varphi_{A}(t)$ as follows. First, 
we have 
\begin{equation}\label{first eq}
\ka\nabla_i \varphi_{A}(t)=\mathrm{Sym}\left[
\left(
\beta_i\sum_{j=1,\atop j\neq i}^N\sum_{n=1}^{L-1}\frac{A_{n,j}t_{L-1}^{\left(
S_n^{(j)}
\right)}}{1-z_it_{L-1}^{\left(
S_n^{(j)}
\right)}}
+\beta_i\frac{A_0t_{L-1}^{\left(
M-A_0+1
\right)}}{1-z_it_{L-1}^{\left(M-A_0+1
\right)}}
+ \left(\beta_i+ \ka\right)\sum_{n=1}^{L-1}\frac{A_{n,i}t_{L-1}^{\left(
S_n^{(i)}
\right)}}{1-z_it_{L-1}^{\left(S_n^{(i)}\right)}}
\right)\bar{\varphi}_A(t)\right]dt, 
\end{equation}
where $\bar{\varphi}_A(t)$ is defined by 
\begin{equation*}
\bar{\varphi}_A(t)=(-1)^{M-A_0}\begin{pmatrix}
M\\
A
\end{pmatrix}
\prod_{i=1}^N\prod_{n=1}^{L-1}
 \prod_{a=S_{n-1}^{(i)}+1}^{S_n^{(i)}}
f^{(i)}_n\left(
t^{(a)}
\right)\prod_{a=M-A_0+1}^Mf_0\left(
t^{(a)}
\right).  
\end{equation*}

 Using a relation
\begin{equation}\label{Jacobi identity}
\frac{t}{1-z_it}=\frac{1-z_jt}{z_i-z_j}\left(
\frac{1}{1-z_it}-\frac{1}{1-z_jt}
\right), 
\end{equation}
we get
\begin{equation*}
\text{the first term of \eqref{first eq}}
=\beta_i\sum_{j=1,\atop j\neq i}^N\sum_{n=1}^{L-1}\frac{1}{z_i-z_j}
\left((A_{n,i}+1) \varphi_{\left(A_{n,j}-1, A_{n,i}+1\right)}(t)
-A_{n,j}
 \varphi_{A}(t)\right), 
\end{equation*}
where $\varphi_{\left(A_{n,j}-1, A_{n,i}+1\right)}(t)$ is the rational $(L-1)M$-form defined for 
the matrix in $\mathcal{A}_M$ whose $(n,j)$ entry is $A_{n,j}-1$ and $(n,i)$ entry is $A_{n,i}+1$, and the other $(m,k)$ entries 
are $A_{m,k}$. 

As for the second term of \eqref{first eq}, 
using a relation 
\begin{equation}\label{eq f0}
\frac{t_{L-1}}{1-z_it_{L-1}}=\frac{1}{(z_i-1)f_0(t)}\left(
-f_0(t)+\sum_{n=1}^{L-1}f^{(i)}_n(t)
\right)
\end{equation}
we obtain 
\begin{equation*}
\text{the second term of \eqref{first eq} }
=\frac{-\beta_i}{z_i-1}\left(A_0
 \varphi_{A}(t)
+\sum_{n=1}^{L-1}
\left(
A_{n,i}+1
\right)
 \varphi_{\left( A_{n,i}+1\right)}(t)
\right),
\end{equation*}
where $\varphi_{\left(A_{n,i}+1\right)}(t)$ is the rational $(L-1)M$-form defined for 
the matrix in $\mathcal{A}_M$ whose  $(n,i)$ entry is $A_{n,i}+1$, and the other $(m,k)$ entries 
are $A_{m,k}$. 

In order to calculate the third term of \eqref{first eq}, 
we compute coboundaries $X_n^{(i)}$ ($n=1,\ldots, L-1$) defined by 
\begin{equation}\label{eq coboundary}
X_n^{(i)}=\ka\sum_{m=n}^{L-1}\nabla\left(\sum_{\sigma\in\frak{S}_M^{L-1}}\sigma\left(
t_{m}^{( (S^{(i)}_n)}
\bar{\varphi}_A(t)\right) \ 
{*dt}_m^{(\sigma_m(S_n^{(i)}))}\right), 
\end{equation}
 where  $*dt_m^{(a)}$ is defined by 
\begin{equation*}
*dt_m^{(a)}=(-1)^{(L-1)(a-1)+m-1}dt_1^{(1)}\wedge  
\cdots \wedge \widehat{dt_m^{(a)}}\wedge \cdots \wedge t_{L-1}^{(M)}, 
\end{equation*}
so that  $dt_m^{(a)}\wedge *dt_m^{(a)}=dt$.

For $m\neq n$, denote by 
 $\varphi_{\left( A_{n,i}-1, A_{m,i}+1\right)}(t)$, the rational $(L-1)M$-form defined for 
the matrix in $\mathcal{A}_M$ whose $(n,i)$ entry is $A_{n,i}-1$ and $(m,i)$ entry is $A_{m,i}+1$, 
and the other $(l,k)$ entries 
are $A_{l,k}$, and denote by $\varphi_{\left(A_{n,i}-1\right)}(t)$,  the rational $(L-1)M$-form defined for 
the matrix in $\mathcal{A}_M$ whose  $(n,i)$ entry is $A_{n,i}-1$, and the other $(l,k)$ entries 
are $A_{l,k}$. 

Using 
the relations \eqref{Jacobi identity} and \eqref{eq f0}, 
 we obtain by straightforward calculations
\begin{align*}
X_n^{(i)}=&\mathrm{Sym}\left[\left(\beta_i+ \ka\right)z_i \frac{t_{L-1}^{(S_n^{(i)})}}{1-z_it_{L-1}^{(S_n^{(i)})}}
\bar{\varphi}_A(t)\right]dt
\\
&+\left(\sum_{m=n}^{L-1}\alpha_m-(L-1-n)\right)\varphi_{A}(t)
+\left(1+\delta_{n,1}(\gamma-1)\right)\sum_{m=n+1}^{L-1}\frac{A_{m,i}+1}{A_{n,i}}
 \varphi_{\left(A_{m,i}+1, A_{n,i}-1 \right)}(t)
\\
&+\left(1+\delta_{n,1}(\gamma-1)\right){1\over z_i-1}\left(\frac{A_0+1}{A_{n,i}}
\varphi_{\left(
A_{n,i}-1
\right)}(t)
+\sum_{m=1,\atop m\neq n}^{L-1}\frac{A_{m,i}+1}{A_{n,i}}
\varphi_{\left(A_{m,i}+1, 
A_{n,i}-1
\right)}(t)+ \varphi_{A}(t)
\right)
\\
&+\sum_{j=1,\atop j\neq i}^N\frac{\beta_jz_j}{z_i-z_j}\left(
 \varphi_{A}(t)-
\frac{A_{n,j}+1}{A_{n,i}}
 \varphi_{\left( 
A_{n,i}-1, A_{n,j}+1 
\right)}(t)
\right)
+Y_{n}^{(i)},
\end{align*}
where 
\begin{equation}\label{eq Y Z}
Y_{n}^{(i)}=\mathrm{Sym}\left[\sum_{m=n}^{L-1}t_{m}^{\left(
S_n^{(i)}
\right)}W_{n,m}^{(i)}\bar{\varphi}_A(t)\right]dt, 
\end{equation}
with 
\begin{equation*}
W^{(i)}_{n,m}=\sum_{a=1,\atop a\neq 
S_n^{(i)}}^M\left(
\frac{-1}{t_m^{\left(
S_n^{(i)}
\right)}
-
t_{m-1}^{(a)}}
+
\frac{2}{t_m^{\left(
S_n^{(i)}
\right)}
-
t_m^{(a)}}
+\frac{-1}{t_m^{\left(
S_n^{(i)}\right)
}
-
t_{m+1}^{(a)}}+\delta_{m,1}\frac{1}{t_{1}^{\left(
S_n^{(i)}
\right)}-1}\right).
\end{equation*}

We compute $Y_n^{(i)}$ in Lemmas \ref{lem l<n Y}, \ref{lem n<l Y}, \ref{lem n=l Y} and \ref{lem l=0 Y}. Owing to  those lemmas, 
we obtain
\begin{align}
&\ka z_i\nabla_i \varphi_A(t)-\sum_{n=1}^{L-1}A_{n,i}X_n^{(i)}=\left\{
-\sum_{n=1}^{L-1}A_{n,i}\left(
\sum_{m=n}^{L-1}\alpha_m+L-n-\beta_i+\sum_{m=1}^nA_{m,i}
\right)\right.\nn
\\
&+\frac{1}{z_i-1}\left(
A_0\left(
\sum_{n=1}^{L-1}A_{n,i}-\beta_i
\right)
-\sum_{j=1}^N\sum_{n=1}^{L-1}A_{n,i}A_{n,j}+A_{1,i}(M-\gamma)
\right)\nn
\\
&\left.+\sum_{j=1,\atop j\neq i}^N\frac{z_j}{z_i-z_j}\sum_{n=1}^{L-1}\left(A_{n,i}\left(
\sum_{m=1}^{L-1}A_{m,j}+A_{n,j}-\beta_j
\right)
-\beta_iA_{n,j}
\right)\right\}\varphi_A(t)\nn
\\
&-\frac{A_0+1}{z_i-1}
\sum_{n=1}^{L-1}\left(\sum_{j=1}^NA_{n,j}+\delta_{n,1}(\gamma-M)\right)\varphi_{(A_{n,i}-1)}(t)
+\frac{z_i}{z_i-1}\left(\sum_{m=1}^{L-1}A_{m,i}-\beta_i\right)\sum_{n=1}^{L-1}(A_{n,i}+1)\varphi_{(A_{n,i}+1)}(t)\nn
\\
&-\frac{1}{z_i-1}\sum_{n=1}^{L-1}\left(\sum_{j=1}^NA_{n,j}+\delta_{n,1}(\gamma-M)\right)
\left(\sum_{m=1}^{n-1}
(A_{m,i}+1)
  \varphi_{\left(A_{m,i}+1, A_{n,i}-1 \right)}(t) 
  +z_i\sum_{m=n+1}^{L-1}
(A_{m,i}+1)
  \varphi_{\left(A_{m,i}+1, A_{n,i}-1 \right)}(t)\right)\nn
  \\
  &+\sum_{j=1,\atop j\neq i}^{N}\frac{A_{n,j}+1}{z_i-z_j}
\left(z_j\sum_{m=1}^{n-1}
(A_{m,i}+1)
  \varphi_{\left(A_{m,j}-1, A_{m,i}+1, A_{n,i}-1, A_{n,j}+1 \right)}(t) 
  +z_i\sum_{m=n+1}^{L-1}
(A_{m,i}+1)
  \varphi_{\left(A_{m,j}-1, A_{m,i}+1, A_{n,i}-1,A_{n,j}+1 \right)}(t)
   \right) \nn
  \\
  &+\sum_{j=1,\atop j\neq i}^{N}\frac{z_i}{z_i-z_j}\left(\beta_i-\sum_{m=1}^{L-1}A_{m,i}\right)
  \sum_{n=1}^{L-1}(A_{n,i}+1)\varphi_{(A_{n,j}-1, A_{n,i}+1)}(t)\nn
  \\
  &+\sum_{j=1,\atop j\neq i}^{N}\frac{z_j}{z_i-z_j}\left(\beta_j-\sum_{m=1}^{L-1}A_{m,j}\right)
  \sum_{n=1}^{L-1}(A_{n,j}+1)\varphi_{(A_{n,i}-1, A_{n,j}+1)}(t), 
\label{eq thm final result}
\end{align}
where the rational $(L-1)M$-form $\varphi_{\left(A_{m,j}-1, A_{m,i}+1, A_{n,i}-1, A_{n,j}+1 \right)}(t)$ be  defined for 
the matrix in $\mathcal{A}_M$ whose $(m,j)$, $(m,i)$, $(n,i)$, $(n,j)$ entries are $A_{m,j}-1$,   $A_{m,i}+1$,  $A_{n,i}-1$, 
and  $A_{n,j}+1$, respectively, and the other $(l,k)$ entries 
are $A_{l,k}$.
Hence, for $A\in\mathcal{A}_M$, as an element in the twisted de Rham cohomology group $H^{(L-1)M}(T, \nabla)$, 
$\ka\nabla_i\varphi_A(t)$ is expressed in terms of elements of $\{\varphi_B(t)|B\in \mathcal{A}_M\}$. 

On the other hand, computations of the action of the Hamiltonian $H_i$ on $q^A$ for $A\in\mathcal{A}_M$ 
is straightforward and it is easy to see that the coefficient of $q^A$ of $H_i \Psi_M({\bf q,z})$ is equal to 
the hypergeometric pairing between the cycle $\Delta\in H_{(L-1)M}(T,\mathcal{S}^*)$ and the right hand 
side of \eqref{eq thm final result}. Therefore, we complete our proof. 
\qed

\subsection{Lemmas}

Through lemmas below,  fix $1\le n\le L-1$, $1\le i\le  N$ and $A\in\mathcal{A}_M$. 
For a triple $(n,i,A)$, the coboundary $X_n^{(i)}$ is defined by \eqref{eq coboundary} 
and  expressed as 
a linear combination of elements in $\{
\varphi_B(t) |B\in\mathcal{A}_M\}$, and $Y_n^{(i)}$. In this subsection, 
we compute $Y_n^{(i)}$, so that we show that they are also expressed as 
a linear combination of elements in 
$\{\varphi_B(t) |B\in\mathcal{A}_M\}$. 

We divide $Y_n^{(i)}$  as 
\begin{equation*}
Y_n^{(i)}=\sum_{1\le j\le N, \atop 1\le l \le L-1 }\left(Y_n^{(i)}\right)_{l,j}+\left(Y_n^{(i)}\right)_0 
\end{equation*}
and we compute  $\left(Y_n^{(i)}\right)_{l,j}$ and $\left(Y_n^{(i)}\right)_0$, 
where for  $l\neq n$ or $j\neq i$, 
\begin{align*}
&\left(Y_n^{(i)}\right)_{l,j}=\mathrm{Sym}\left[A_{l,j}C\left(n, S_n^{(i)}, S_l^{(j)}\right)
\bar{\varphi}_A(t)\right]dt, 
\\
&\left(Y_n^{(i)}\right)_{n,i}=\mathrm{Sym}\left[(A_{n,i}-1)C\left(n, S_n^{(i)}, S_n^{(i)}-1\right)\bar{\varphi}_A(t)\right]dt, 
\\
&\left(Y_n^{(i)}\right)_0=\mathrm{Sym}\left[A_0C\left(n, S_n^{(i)}, M-A_0+1\right)\bar{\varphi}_A(t)\right]dt. 
\end{align*}
Here,  for $1\le a\neq b\le M$,   
\begin{equation*}
C(n,a,b)=\sum_{m=n}^{L-1}t_m^{(a)}\left(
\frac{-1}{t_{m}^{(a)}-t_{m-1}^{(b)}}+\frac{2}{t_{m}^{(a)}-t_{m}^{(b)}}
+\frac{-1}{t_{m}^{(a)}-t_{m+1}^{(b)}}\right)+\delta_{n,1}\frac{t_{1}^{(a)}}{t_{1}^{(a)}-1}. 
\end{equation*}

 Let the rational functions $f_{l,m}^{(j)}(t^{(a)})$ be defined by 
\begin{equation*}
f_{l,m}^{(j)}(t^{(a)})={1\over 1-z_jt_{L-1}^{(a)}}\sum_{k=1,\atop k\neq l,m}^{L-1}\frac{1}{t_{k-1}^{(a)}-t_k^{(a)}}. 
\end{equation*}

\begin{lem}\label{lem l<n Y}
When $1\le l< n$, for $1\le j\neq i\le N$, we have 
\begin{align*}
\left(Y_n^{(i)}\right)_{l,j}=&\left(A_{l,i}+1\right)  \varphi_{\left(A_{l,j}-1, A_{l,i}+1 \right)}(t) 
+\frac{z_j}{z_i-z_j}\left(\left(
A_{l,i}+1
\right)
  \varphi_{\left(A_{l,j}-1, A_{l,i}+1 \right)}(t) 
\right.
\\
&-A_{l,j}  \varphi_{A }(t) 
-\frac{\left(
A_{l,i}+1
\right)
\left(
A_{n,j}+1
\right)}{A_{n,i}}
  \varphi_{\left(A_{l,j}-1, A_{l,i}+1, A_{n,i}-1, A_{n,j}+1 \right)}(t) 
\left.+\frac{\left(
A_{n,j}+1
\right)A_{l,j}}{A_{n,i}}
  \varphi_{\left(A_{n,j}+1, A_{n,i}-1 \right)}(t) 
\right),
\end{align*}
and 
for $j=i$, we have
\begin{equation*}
\left(Y_n^{(i)}\right)_{l,i}=A_{l,i}  \varphi_{A}(t) . 
\end{equation*}
\end{lem}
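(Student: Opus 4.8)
The plan is to compute the quantity $\left(Y_n^{(i)}\right)_{l,j}$ directly from its definition as a symmetrized sum, reducing the rational expression $C(n, S_n^{(i)}, S_l^{(j)})$ via partial fractions into pieces that can each be recognized, after symmetrization against $\bar\varphi_A(t)$, as a scalar multiple of one of the basis forms $\varphi_B(t)$. The key structural input is the explicit form of $C(n,a,b)$: it is a telescoping-style sum over $m$ of terms $t_m^{(a)}$ divided by differences $t_m^{(a)}-t_{m\pm1}^{(b)}$ and $t_m^{(a)}-t_m^{(b)}$. Since $l<n$, the index $m$ in $C$ runs from $n$ to $L-1$, and the competing cluster labelled by $(l,j)$ sits at levels below $n$; the essential observation is that the form $\bar\varphi_A$ already contains the factors $f_n^{(i)}$ and $f_l^{(j)}$, so that multiplying by $t_m^{(a)}/(t_m^{(a)}-t_{m'}^{(b)})$ and then symmetrizing merges or swaps the two clusters, producing exactly the index-shifted matrices appearing on the right-hand side.

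First I would fix the meaning of the superscripts: $a=S_n^{(i)}$ is the top integration index of the $i$-th, $n$-th cluster and $b=S_l^{(j)}$ that of the $j$-th, $l$-th cluster. I would then split $C(n,a,b)$ into the three families of terms and treat each using the identity
\begin{equation*}
\frac{t_m^{(a)}}{t_m^{(a)}-t_{m'}^{(b)}}=1+\frac{t_{m'}^{(b)}}{t_m^{(a)}-t_{m'}^{(b)}},
\end{equation*}
which separates a constant piece (reproducing $\varphi_A$ up to combinatorial factors) from a genuinely cross-cluster piece. The constant pieces assemble, after the telescoping cancellation in $m$, into the coefficients $A_{l,j}$ and $-A_{l,j}z_j/(z_i-z_j)$ multiplying $\varphi_A$; I would track these carefully since the telescoping is where most of the bookkeeping lives. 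The cross-cluster pieces are where the differences $t_m^{(a)}-t_{m'}^{(b)}$ combine with the factors already present in $f_n^{(i)}$ and $f_l^{(j)}$: here I would use that $f_n^{(i)}(t^{(a)})$ carries a factor $1/(t_{n-1}^{(a)}-t_n^{(a)})$ and that after symmetrization a difference such as $t_n^{(a)}-t_{n-1}^{(b)}$ can be reindexed to exhibit the shifted form $\varphi_{(A_{l,j}-1,A_{l,i}+1)}$, etc. The appearance of $1/(z_i-z_j)$ comes from applying relation \eqref{Jacobi identity} to the $t_{L-1}/(1-z t_{L-1})$ factors carried by the $f$'s, exactly as in the body of the proof of Theorem \ref{thm IF}.

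The hard part will be the precise matching of combinatorial prefactors $(A_{l,i}+1)$, $(A_{n,j}+1)/A_{n,i}$ and their products against the multinomial coefficients $\binom{M}{A}$ hidden inside each $\varphi_B$. Because $\varphi_B$ is itself a symmetrized object with its own $\binom{M}{B}$ normalization, every reindexing of integration variables that moves a variable from one cluster to another rescales the multinomial coefficient, and these rescalings are exactly what produce the rational factors like $(A_{l,i}+1)(A_{n,j}+1)/A_{n,i}$. I would therefore organize the argument so that each cross-cluster term is rewritten as $\binom{M}{B}\cdot(\text{scalar})\cdot(\text{product of }f\text{'s for }B)$, read off the scalar by comparing $\binom{M}{A}$ with $\binom{M}{B}$, and confirm it agrees with the stated coefficient. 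The special sub-case $j=i$ is genuinely simpler: there is no $z_j/(z_i-z_j)$ prefactor and the two clusters belong to the same column, so after symmetrization the only surviving contribution is the constant piece giving $A_{l,i}\,\varphi_A(t)$, which I would verify by noting that the cross terms cancel against the symmetrization over the $\mathfrak{S}_M$ factor acting within column $i$.
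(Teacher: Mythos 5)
Your plan has the right general shape (reduce to a two--cluster identity, use \eqref{Jacobi identity} at the end to produce the $z_j/(z_i-z_j)$ factors, match multinomial normalizations), but it is missing the one idea that actually makes the computation close, and one of its intermediate claims is false. The decomposition $\frac{t_m^{(a)}}{t_m^{(a)}-t_{m'}^{(b)}}=1+\frac{t_{m'}^{(b)}}{t_m^{(a)}-t_{m'}^{(b)}}$ kills the constant parts in the interior of the sum, since the weights are $-1+2-1=0$ for each $m$; so the constant pieces cannot ``assemble into the coefficients $A_{l,j}$ and $-A_{l,j}z_j/(z_i-z_j)$ multiplying $\varphi_A$'' --- those coefficients in fact come out of the very last step, after the $t_{L-1}$ variables of the two clusters have been exchanged and the resulting $\frac{1}{t_{L-1}^{(1)}-t_{L-1}^{(2)}}\bigl(f_n^{(i)}(t^{(1)})f_l^{(j)}(t^{(2)})/t_{L-1}^{(2)}-f_n^{(j)}(t^{(1)})f_l^{(i)}(t^{(2)})/t_{L-1}^{(1)}\bigr)$ is simplified (the apparent pole cancels and the remaining rational function of $t_{L-1}^{(1)},t_{L-1}^{(2)}$ is split by partial fractions in the $z$'s).

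The deeper gap is the claim that after partial fractions each piece ``can be recognized as a scalar multiple of one of the basis forms $\varphi_B(t)$.'' This is not true term by term: a cross--cluster factor $\frac{t_{m'}^{(b)}}{t_m^{(a)}-t_{m'}^{(b)}}$ with $a=S_n^{(i)}$, $b=S_l^{(j)}$ multiplying $\bar\varphi_A$ involves denominators $t_m^{(a)}-t_{m'}^{(b)}$ that do not occur in any $f_0$ or $f_n^{(i)}$, hence in no $\varphi_B$. What the paper supplies, and what your proposal lacks, is the telescoping identity \eqref{eq l<n claim}: the partial sums $\sum_{m=n}^{k}$ of $C$, symmetrized against $f_n^{(i)}(t^{(1)})f_l^{(j)}(t^{(2)})$, collapse by induction on $k$ into a single compact product, each inductive step using an interchange $t_k^{(1)}\leftrightarrow t_k^{(2)}$ under $\mathrm{Sym}$ to merge the new term with the previous output. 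Only after the full sum has been telescoped up to $m=L-1$ does everything reduce to products of the $f$'s, at which point the $\varphi_B$ identification and the combinatorial bookkeeping you describe become possible. Without that induction (or an equivalent mechanism for eliminating the cross--cluster denominators), the proof does not go through. Your heuristic for the $j=i$ case is also not quite the right reason: the surviving term is not a ``constant piece'' but the $\mathrm{Sym}[f_n^{(i)}f_l^{(i)}]$ term of \eqref{eq l<n Y}, while the $\frac{z_j}{z_i-z_j}$ term vanishes because its numerator $(f_n^{(i)}-f_n^{(j)})(f_l^{(i)}-f_l^{(j)})$ is identically zero at $j=i$.
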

\begin{proof}
It suffices to show that 
\begin{align}
&\mathrm{Sym}\left[
C(n,1,2){1\over t_{L-1}^{(1)}}f_n^{(i)}(t^{(1)}){1\over t_{L-1}^{(2)}}f_l^{(j)}(t^{(2)})
\right]\nn
\\
&=\mathrm{Sym}\left[
{1\over t_{L-1}^{(1)}}f_n^{(i)}(t^{(1)}){1\over t_{L-1}^{(2)}}f_l^{(i)}(t^{(2)})
\right]\nn
\\
&+{z_j\over z_i-z_j}\mathrm{Sym}\left[
{1\over t_{L-1}^{(1)}}{1\over t_{L-1}^{(2)}}\left(f_n^{(i)}(t^{(1)})-f_n^{(j)}(t^{(1)})\right)
\left(
f_l^{(i)}(t^{(2)})-f_l^{(j)}(t^{(2)})\right)
\right], \label{eq l<n Y}
\end{align}
where the symmetrization $\mathrm{Sym}[f(t)]$ stands for $\sum_{\sigma\in\frak{S}_2^{L-1}}\sigma(f(t))$ 
(see \eqref{eq sigma}), the rational functions $f_n^{(i)}(t^{(a)})$ are defined in Definition \ref{def integral formula}, and 
if $j=i$, then 
we understand that the second line of the right hand side of \eqref{eq l<n Y} is vanished.

Firstly, we claim that for $n\le k \le L-2$, we have 
\begin{align}
&\mathrm{Sym}\left[
\sum_{m=n}^k t_m^{(1)}\left(
\frac{-1}{t_{m}^{(1)}-t_{m-1}^{(2)}}+\frac{2}{t_{m}^{(1)}-t_{m}^{(2)}}
+\frac{-1}{t_{m}^{(1)}-t_{m+1}^{(2)}}\right){1\over t_{L-1}^{(1)}}f_n^{(i)}(t^{(1)}){1\over t_{L-1}^{(2)}}f_l^{(j)}(t^{(2)})
\right]\nn
\\
&=\mathrm{Sym}\left[ 
{1\over \left(t_k^{(1)}-t_{k+1}^{(2)}\right)}{ 1\over \left(  t_{k+1}^{(1)}-t_{k}^{(2)} \right)}{t_{k+1}^{(1)}\over t_{L-1}^{(1)}}
f_n^{(i)}(t^{(1)}){1\over t_{L-1}^{(2)}}f_{l,k+1}^{(j)}(t^{(2)})
\right]. \label{eq l<n claim}
\end{align}

We show \eqref{eq l<n claim} by induction. Let $k=n$. then,  we have 
\begin{align}
&\mathrm{Sym}\left[\left(\frac{-1}{t_{n}^{(1)}-t_{n-1}^{(2)}}+\frac{1}{t_{n}^{(1)}-t_{n}^{(2)}}
\right){t_{n}^{(1)}\over t_{L-1}^{(1)}}f_n^{(i)}(t^{(1)}){1\over t_{L-1}^{(2)}}f_l^{(j)}(t^{(2)})
\right]\nn
\\
&=\mathrm{Sym}\left[
\frac{1}{t_{n-1}^{(2)}-t_{n}^{(1)}}\frac{1}{t_{n}^{(1)}-t_{n}^{(2)}}{t_{n}^{(1)}\over t_{L-1}^{(1)}}f_n^{(i)}(t^{(1)}){1\over t_{L-1}^{(2)}}f_{l,n}^{(j)}(t^{(2)})
\right]\nn
\\
&=\mathrm{Sym}\left[
\frac{-1}{t_{n}^{(1)}-t_{n}^{(2)}}\frac{1}{t_{n}^{(2)}-t_{n+1}^{(1)}}\frac{1}{t_{n}^{(1)}-t_{n+1}^{(2)}}{t_{n}^{(2)}\over t_{L-1}^{(1)}}
f_{n,n+1}^{(i)}(t^{(1)}){1\over t_{L-1}^{(2)}}f_{l,n+1}^{(j)}(t^{(2)})
\right],\label{eq l<n n1}
\end{align}
where in the last line, we interchange $t_n^{(1)}$ with $t_n^{(2)}$ and 
\begin{align}
&\mathrm{Sym}\left[\left(\frac{1}{t_{n}^{(1)}-t_{n}^{(2)}}+\frac{-1}{t_{n}^{(1)}-t_{n+1}^{(2)}}
\right){t_{n}^{(1)}\over t_{L-1}^{(1)}}f_n^{(i)}(t^{(1)}){1\over t_{L-1}^{(2)}}f_l^{(j)}(t^{(2)})
\right]\nn
\\
&=\mathrm{Sym}\left[\frac{1}{t_{n}^{(1)}-t_{n}^{(2)}}\frac{1}{t_{n}^{(1)}-t_{n+1}^{(2)}}
{t_{n}^{(1)}\over t_{L-1}^{(1)}}f_n^{(i)}(t^{(1)}){1\over t_{L-1}^{(2)}}f_{l,n+1}^{(j)}(t^{(2)})
\right]. \label{eq l<n n2}
\end{align}
Thus, the left hand side of \eqref{eq l<n claim} for $k=n$, that is, \eqref{eq l<n n1} plus \eqref{eq l<n n2},  
becomes the right hand side of \eqref{eq l<n claim} for $k=n$. 

Suppose \eqref{eq l<n claim} holds for $k-1$, then 
\begin{align*}
&\mathrm{Sym}\left[\left(t_{k}^{(1)}\left(\frac{-1}{t_{k}^{(1)}-t_{k-1}^{(2)}}+\frac{1}{t_{k}^{(1)}-t_{k}^{(2)}}\right)+
\sum_{m=n}^{k-1} t_{m}^{(1)}\left(\frac{-1}{t_{m}^{(1)}-t_{m-1}^{(2)}}+\frac{2}{t_{m}^{(1)}-t_{m}^{(2)}}
+\frac{-1}{t_{m}^{(1)}-t_{m+1}^{(2)}}\right)\right){1\over t_{L-1}^{(1)}}f_n^{(i)}(t^{(1)}){1\over t_{L-1}^{(2)}}f_l^{(j)}(t^{(2)})
\right]
\nn
\\
&=\mathrm{Sym}\left[
\frac{1}{t_{k}^{(1)}-t_{k}^{(2)}}\frac{1}{t_{k-1}^{(2)}-t_{k}^{(1)}}\frac{1}{t_{k-1}^{(1)}-t_{k}^{(2)}}{t_{k}^{(1)}\over t_{L-1}^{(1)}}
f_{n,k}^{(i)}(t^{(1)}){1\over t_{L-1}^{(2)}}f_{l,k}^{(j)}(t^{(2)})
\right]
\\
&=\mathrm{Sym}\left[
\frac{-1}{t_{k}^{(1)}-t_{k}^{(2)}}\frac{1}{t_{k}^{(2)}-t_{k+1}^{(1)}}\frac{1}{t_{k}^{(1)}-t_{k+1}^{(2)}}{t_{k}^{(2)}\over t_{L-1}^{(1)}}
f_{n,k+1}^{(i)}(t^{(1)}){1\over t_{L-1}^{(2)}}f_{l,k+1}^{(j)}(t^{(2)})
\right],
\end{align*}
where in the last line, we interchange $t_k^{(1)}$ with $t_k^{(2)}$. Since 
\begin{align*}
&\mathrm{Sym}\left[\left(\frac{1}{t_{k}^{(1)}-t_{k}^{(2)}}+\frac{-1}{t_{k}^{(1)}-t_{k+1}^{(2)}}
\right){t_{k}^{(1)}\over t_{L-1}^{(1)}}f_n^{(i)}(t^{(1)}){1\over t_{L-1}^{(2)}}f_l^{(j)}(t^{(2)})
\right]\nn
\\
&=\mathrm{Sym}\left[\frac{1}{t_{k}^{(1)}-t_{k}^{(2)}}\frac{1}{t_{k}^{(1)}-t_{k+1}^{(2)}}{t_{k}^{(1)}\over t_{L-1}^{(1)}}
f_n^{(i)}(t^{(1)}){1\over t_{L-1}^{(2)}}f_{l,k+1}^{(j)}(t^{(2)})
\right], 
\end{align*}
the left hand side of \eqref{eq l<n claim} for $k$ becomes the right hand side of \eqref{eq l<n claim} for $k$.

Secondly,  using \eqref{eq l<n claim} for $k=L-2$, we have 
\begin{align*}
&\mathrm{Sym}\left[\left(t_{L-1}^{(1)}\left(\frac{-1}{t_{L-1}^{(1)}-t_{L-2}^{(2)}}+\frac{1}{t_{L-1}^{(1)}-t_{L-1}^{(2)}}\right)+
\sum_{m=n}^{L-2}t_{m}^{(1)} \left(\frac{-1}{t_{m}^{(1)}-t_{m-1}^{(2)}}+\frac{2}{t_{m}^{(1)}-t_{m}^{(2)}}
+\frac{-1}{t_{m}^{(1)}-t_{m+1}^{(2)}}\right)\right){1\over t_{L-1}^{(1)}}f_n^{(i)}(t^{(1)}){1\over t_{L-1}^{(2)}}f_l^{(j)}(t^{(2)})
\right]
\nn
\\
&=\mathrm{Sym}\left[
\frac{1}{t_{L-1}^{(1)}-t_{L-1}^{(2)}}\frac{1}{t_{L-2}^{(2)}-t_{L-1}^{(1)}}\frac{1}{t_{L-2}^{(1)}-t_{L-1}^{(2)}}
f_{n,L-1}^{(i)}(t^{(1)}){1\over t_{L-1}^{(2)}}f_{l,L-1}^{(j)}(t^{(2)})
\right]
\\
&=\mathrm{Sym}\left[
\frac{-1}{t_{L-1}^{(1)}-t_{L-1}^{(2)}}
f_{n}^{(j)}(t^{(1)}){1\over t_{L-1}^{(1)}}f_{l}^{(i)}(t^{(2)})
\right],
\end{align*}
where in the last line, we interchange $t_{L-1}^{(1)}$ with $t_{L-1}^{(2)}$. Hence, the left hand side of \eqref{eq l<n Y} 
is equal to 
\begin{align*}
&\mathrm{Sym}\left[
\frac{-1}{t_{L-1}^{(1)}-t_{L-1}^{(2)}}
f_{n}^{(j)}(t^{(1)}){1\over t_{L-1}^{(1)}}f_{l}^{(i)}(t^{(2)})+\frac{1}{t_{L-1}^{(1)}-t_{L-1}^{(2)}}
f_n^{(i)}(t^{(1)}){1\over t_{L-1}^{(2)}}f_{l}^{(j)}(t^{(2)})
\right]\nn
\\
&=\mathrm{Sym}\left[{1\over t_{L-1}^{(1)}}{1\over t_{L-1}^{(2)}}
f_{n}^{(i)}(t^{(1)})f_{l}^{(i)}(t^{(2)})\frac{1-z_j(t_{L-1}^{(1)}+t_{L-1}^{(2)})+z_iz_jt_{L-1}^{(1)}t_{L-1}^{(2)}}{(1-z_jt_{L-1}^{(1)})(1-z_jt_{L-1}^{(2)})}
\right]. 
\end{align*}
Therefore, the relation \eqref{eq l<n Y} holds.

\end{proof}

\begin{lem}\label{lem n<l Y}
When $n < l\le L-1$,   for $1\le j\neq i\le N$, we have 
\begin{align*}
\left(Y_n^{(i)}\right)_{l,j}=&-A_{l,j}\left( \frac{\delta_{n,1}}{z_i-1}+ \frac{z_j}{z_i-z_j} \right)  \varphi_{A}(t)
+
\left(
A_{l,i}+1
\right)\frac{z_i}{z_i-z_j}
  \varphi_{\left(A_{l,j}-1, A_{l,i}+1 \right)}(t) 
\\
& 
-\frac{\left(
A_{l,i}+1
\right)
\left(
A_{n,j}+1
\right)}{A_{n,i}}\frac{z_i}{z_i-z_j}
  \varphi_{\left(A_{l,j}-1, A_{l,i}+1, A_{n,i}-1, A_{n,j}+1\right)}(t) 
+
\frac{\left(
A_{n,j}+1
\right)A_{l,j}}{A_{n,i}} \frac{z_j}{z_i-z_j} \varphi_{\left(A_{n,j}+1, A_{n,i}-1 \right)}(t) 
\\
&-\frac{A_{l,j}}{A_{n,i}}\frac{\delta_{n,1}}{(z_i-1)}\left(\left(
A_0+1
\right)
  \varphi_{\left( A_{n,i}-1 \right)}(t) 
+z_i\sum_{m=2}^{L-1}\left(
A_{m,i}+1
\right) \varphi_{\left(A_{m,i}+1, A_{1,i}-1 \right)}(t) 
\right),
\end{align*}
and 
for $j=i$, we have 
\begin{align*}
\left(Y_n^{(i)}\right)_{l,i}=&
-\frac{\delta_{n,1}A_{l,i}}{(z_i-1)}\left(  \varphi_{A}(t)+{\left(
A_0+1
\right)\over A_{1,i}}
  \varphi_{\left( A_{1,i}-1 \right)}(t) 
+z_i\sum_{m=2}^{L-1}{\left(
A_{m,i}+1
\right) \over A_{1,i} }\varphi_{\left(A_{m,i}+1, A_{1,i}-1 \right)}(t) 
\right) 
\end{align*}
\end{lem}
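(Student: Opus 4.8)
The plan is to follow the strategy of the proof of Lemma \ref{lem l<n Y}, adapting it to the range $n<l\le L-1$. As there, the term $\left(Y_n^{(i)}\right)_{l,j}$ records only the interaction between the copy indexed by $S_n^{(i)}$, carrying the factor $f_n^{(i)}$, and the copy indexed by $S_l^{(j)}$, carrying $f_l^{(j)}$; after symmetrization it therefore suffices to establish a two-variable identity in which these copies are relabelled $1$ and $2$. Concretely, I would reduce the claim to computing
\begin{equation*}
\mathrm{Sym}\left[C(n,1,2)\,\frac{1}{t_{L-1}^{(1)}}f_n^{(i)}(t^{(1)})\,\frac{1}{t_{L-1}^{(2)}}f_l^{(j)}(t^{(2)})\right]
\end{equation*}
and then translating the resulting combination of $f$-functions into the forms $\varphi_B(t)$ by means of the partial-fraction relations \eqref{Jacobi identity} and \eqref{eq f0}.

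The decisive difference from Lemma \ref{lem l<n Y} is that here the level $l$ of the second factor lies \emph{inside} the summation range $m=n,\dots,L-1$ of $C(n,1,2)$, precisely at the gap of $f_l^{(j)}$. I would run the same telescoping induction on $k$ as in \eqref{eq l<n claim}, interchanging $t_k^{(1)}$ with $t_k^{(2)}$ at each step, but now the induction meets a collision at $m=l$: the denominator $1/(t_l^{(1)}-t_l^{(2)})$ produced by the swap interacts with the missing factor of $f_l^{(j)}$, generating the forms with entries shifted simultaneously at levels $n$ and $l$, namely $\varphi_{(A_{l,j}-1,A_{l,i}+1,A_{n,i}-1,A_{n,j}+1)}(t)$ and $\varphi_{(A_{l,j}-1,A_{l,i}+1)}(t)$. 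Carrying the induction through to $k=L-1$ and applying \eqref{Jacobi identity} to each factor of the form $t_{L-1}/(1-z_it_{L-1})$ then produces the coefficients $z_i/(z_i-z_j)$ and $z_j/(z_i-z_j)$ recorded in the statement.

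The remaining contributions come from the boundary term $\delta_{n,1}\,t_1^{(a)}/(t_1^{(a)}-1)$, present only when $n=1$. Rewriting $t_1/(1-z_it_1)$-type factors via \eqref{eq f0} in terms of $f_0$ and the $f_m^{(i)}$, this term yields exactly the $\delta_{n,1}/(z_i-1)$ pieces, including the sum $\sum_{m=2}^{L-1}(A_{m,i}+1)\,\varphi_{(A_{m,i}+1,A_{1,i}-1)}(t)$ together with the $\varphi_{(A_{1,i}-1)}(t)$ term carrying the factor $A_0+1$. For the special case $j=i$ the coefficient $z_j/(z_i-z_j)$ is singular, so that branch must be handled separately: there the distinct-point interactions cancel after symmetrization (no $z_i/(z_i-z_i)$ terms survive), and only the boundary contribution at $t=1$ remains, giving the stated formula for $\left(Y_n^{(i)}\right)_{l,i}$, which is proportional to $\delta_{n,1}$.

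The main obstacle I anticipate is the bookkeeping across the collision level $l$: one must simultaneously track the telescoping residues, the relabelling induced by the symmetrization, and the emergence of the combinatorial prefactors $(A_{l,i}+1)$ and $(A_{n,j}+1)/A_{n,i}$ that convert a symmetrized product of rational functions into the normalized forms $\varphi_B$ with their binomial weights $\binom{M}{B}$. Checking that each such rational identity is an exact equality of $(L-1)M$-forms, and that the swaps $t_k^{(1)}\leftrightarrow t_k^{(2)}$ preserve holomorphy outside $D$, is the delicate part; once the two-variable identity is secured, assembling $\left(Y_n^{(i)}\right)_{l,j}$ is routine.
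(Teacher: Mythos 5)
Your proposal follows essentially the same route as the paper: reduce to a symmetrized two-copy identity for $C(n,1,2)\,f_n^{(i)}(t^{(1)})f_l^{(j)}(t^{(2)})$, telescope across the collision at level $l$ via the interchanges $t_k^{(1)}\leftrightarrow t_k^{(2)}$ (the paper does this in two stages, \eqref{eq l<n claim} up to $k=l-2$ and a diagonally re-indexed claim \eqref{eq n<l claim} from $m=l$ onward), convert the boundary term at $t_1^{(a)}=1$ into the $\delta_{n,1}/(z_i-1)$ contributions using \eqref{eq f0}, and note that for $j=i$ only that boundary piece survives. One small correction to your mechanism: the four-index forms $\varphi_{(A_{l,j}-1,A_{l,i}+1,A_{n,i}-1,A_{n,j}+1)}(t)$ do not appear at the collision level itself but only at the very end, as cross terms in the expansion of the product $\bigl(f_n^{(i)}-f_n^{(j)}\bigr)\bigl(z_if_l^{(i)}-z_jf_l^{(j)}\bigr)/(z_i-z_j)$ obtained after the final interchange at $t_{L-1}$; likewise the $j=i$ case vanishes because that product is identically zero, not by a cancellation under symmetrization.
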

\begin{proof}
It suffices to show that for $n\ge 2$, 
\begin{align}
&\mathrm{Sym}\left[
C(n,1,2){1\over t_{L-1}^{(1)}}f_n^{(i)}(t^{(1)}){1\over t_{L-1}^{(2)}}f_l^{(j)}(t^{(2)})
\right]\nn
\\
&=\frac{1}{z_i-z_j}\mathrm{Sym}\left[{1\over t_{L-1}^{(1)}}{1\over t_{L-1}^{(2)}}
\left(f_n^{(i)}(t^{(1)})-f_n^{(j)}(t^{(1)})\right)\left(z_if_l^{(i)}(t^{(2)})-z_jf_l^{(j)}(t^{(2)})\right)
\right], \label{eq n<l Y}
\end{align}
and for $n=1$, 
\begin{align}
&\mathrm{Sym}\left[
C(1,1,2){1\over t_{L-1}^{(1)}}f_1^{(i)}(t^{(1)}){1\over t_{L-1}^{(2)}}f_l^{(j)}(t^{(2)})
\right]\nn
\\
&=\frac{1}{z_i-z_j}\mathrm{Sym}\left[{1\over t_{L-1}^{(1)}}{1\over t_{L-1}^{(2)}}
\left(f_1^{(i)}(t^{(1)})-f_1^{(j)}(t^{(1)})\right)\left(z_if_l^{(i)}(t^{(2)})-z_jf_l^{(j)}(t^{(2)})\right)
\right],\nn
\\
&+\frac{1}{z_i-1}\mathrm{Sym}\left[{1\over t_{L-1}^{(1)}}\left(  
f_0(t^{(1)})-f_1^{(i)}(t^{(1)})-z_i\sum_{m=2}^{L-1}f_m^{(i)}(t^{(1)})
 \right)
{1\over t_{L-1}^{(2)}}f_l^{(j)}(t^{(2)})
\right], \label{eq 1<l  Y}
\end{align}
where the symmetrization $\mathrm{Sym}[f(t)]$ stands for $\sum_{\sigma\in\frak{S}_2^{L-1}}\sigma(f(t))$ 
(see \eqref{eq sigma}),  and 
if $j=i$, then 
we understand that the right hand side of \eqref{eq n<l Y} and the first line of the right hand side of 
\eqref{eq 1<l  Y} is zero. 

We shall show \eqref{eq n<l Y}. Firstly, using \eqref{eq l<n claim} for $k=l-2$, we have 
\begin{align}
&\mathrm{Sym}\left[\left(
\sum_{m=n}^{l-2}t_m^{(1)}\left(
\frac{-1}{t_{m}^{(1)}-t_{m-1}^{(2)}}+\frac{2}{t_{m}^{(1)}-t_{m}^{(2)}}
+\frac{-1}{t_{m}^{(1)}-t_{m+1}^{(2)}}\right)+t_{l-1}^{(1)}\left(\frac{-1}{t_{l-1}^{(1)}-t_{l-2}^{(1)}}+\frac{1}{t_{l-1}^{(1)}-t_{l-1}^{(2)}}\right)
\right)
{1\over t_{L-1}^{(1)}}f_n^{(i)}(t^{(1)}){1\over t_{L-1}^{(2)}}f_l^{(j)}(t^{(2)})
\right]\nn
\\
&=\mathrm{Sym}\left[
\frac{1}{t_{l-1}^{(1)}-t_{l-1}^{(2)}}\frac{1}{t_{l-2}^{(1)}-t_{l-1}^{(2)}}\frac{1}{t_{l-2}^{(2)}-t_{l-1}^{(1)}}
{t_{l-1}^{(1)}\over t_{L-1}^{(1)}}f_{n,l-1}^{(i)}(t^{(1)}){1\over t_{L-1}^{(2)}}f_{l-1,l}^{(j)}(t^{(2)})
\right]\nn
\\
&=\mathrm{Sym}\left[
\frac{-1}{t_{l-1}^{(1)}-t_{l-1}^{(2)}}\frac{1}{t_{l-1}^{(2)}-t_{l}^{(1)}}
{t_{l-1}^{(2)}\over t_{L-1}^{(1)}}f_{n,l}^{(i)}(t^{(1)}){1\over t_{L-1}^{(2)}}f_{l}^{(j)}(t^{(2)})
\right],\label{eq n<l Y 1}
\end{align}
where in the last line, we interchange $t_{l-1}^{(1)}$ with $t_{l-1}^{(2)}$. Thus, \eqref{eq n<l Y 1} is equal to 
\begin{equation*}
-\mathrm{Sym}\left[
\left(  \frac{t_{l-1}^{(1)}}{t_{l-1}^{(1)}-t_{l-1}^{(2)}}+\frac{-t_{l}^{(1)}}{t_{l}^{(1)}-t_{l-1}^{(2)}} \right)
{1\over t_{L-1}^{(1)}}f_n^{(i)}(t^{(1)}){1\over t_{L-1}^{(2)}}f_l^{(j)}(t^{(2)})
\right].
\end{equation*}

Secondly, we claim that for $l\le k\le L-2$, we have 
\begin{align}
&\mathrm{Sym}\left[
\sum_{m=l}^k \left(\frac{-t_{m+1}^{(1)}}{t_{m+1}^{(1)}-t_{m}^{(2)}}+\frac{2t_{m}^{(1)}}{t_{m}^{(1)}-t_{m}^{(2)}}
+\frac{-t_{m-1}^{(1)}}{t_{m-1}^{(1)}-t_{m}^{(2)}}\right){1\over t_{L-1}^{(1)}}f_n^{(i)}(t^{(1)}){1\over t_{L-1}^{(2)}}f_l^{(j)}(t^{(2)})
\right]\nn
\\
&=\mathrm{Sym}\left[ 
{1\over \left(t_k^{(1)}-t_{k+1}^{(2)}\right)}{ 1\over \left(  t_{k}^{(2)}-t_{k+1}^{(1)} \right)}{1\over t_{L-1}^{(1)}}
f_{n,k+1}^{(i)}(t^{(1)}){t_{k+1}^{(2)}\over t_{L-1}^{(2)}}f_{l}^{(j)}(t^{(2)})
\right]. \label{eq n<l claim}
\end{align}
We can prove \eqref{eq n<l claim} by induction and omit the proof of this claim. 

Using \eqref{eq n<l claim} for $k=L-2$, we have
\begin{align*}
&\mathrm{Sym}\left[\left(\frac{t_{L-1}^{(1)}}{t_{L-1}^{(1)}-t_{L-1}^{(2)}}+\frac{-t_{L-2}^{(1)}}{t_{L-2}^{(1)}-t_{L-1}^{(2)}}+
\sum_{m=l}^{L-2} \left(\frac{-t_{m+1}^{(1)}}{t_{m+1}^{(1)}-t_{m}^{(2)}}+\frac{2t_{m}^{(1)}}{t_{m}^{(1)}-t_{m}^{(2)}}
+\frac{-t_{m-1}^{(1)}}{t_{m-1}^{(1)}-t_{m}^{(2)}}\right)\right){1\over t_{L-1}^{(1)}}f_n^{(i)}(t^{(1)}){1\over t_{L-1}^{(2)}}f_l^{(j)}(t^{(2)})
\right]\nn
\\
&=\mathrm{Sym}\left[
\frac{1}{t_{L-1}^{(1)}-t_{L-1}^{(2)}}\frac{1}{t_{L-2}^{(1)}-t_{L-1}^{(2)}}\frac{1}{t_{L-2}^{(2)}-t_{L-1}^{(1)}}
{1\over t_{L-1}^{(1)}}f_{n,L-1}^{(i)}(t^{(1)})f_{l,L-1}^{(j)}(t^{(2)})
\right]
\\
&=\mathrm{Sym}\left[
\frac{-1}{t_{L-1}^{(1)}-t_{L-1}^{(2)}}{1\over t_{L-1}^{(2)}}f_{n}^{(j)}(t^{(1)})f_{l}^{(i)}(t^{(2)})
\right],
\end{align*}
where in the last line, we interchange $t_{L-1}^{(1)}$ with $t_{L-1}^{(2)}$. Hence, the left hand side of \eqref{eq n<l Y} is 
equal to 
\begin{align*}
&\mathrm{Sym}\left[
\frac{-1}{t_{L-1}^{(1)}-t_{L-1}^{(2)}}
f_{n}^{(j)}(t^{(1)}){1\over t_{L-1}^{(2)}}f_{l}^{(i)}(t^{(2)})+\frac{1}{t_{L-1}^{(1)}-t_{L-1}^{(2)}}
f_n^{(i)}(t^{(1)}){1\over t_{L-1}^{(2)}}f_{l}^{(j)}(t^{(2)})
\right]\nn
\\
&=(z_i-z_j)\mathrm{Sym}\left[\frac{1}{1-z_jt_{L-1}^{(1)}}f_n^{(i)}(t^{(1)})\frac{1}{t_{L-1}^{(2)}(1-z_it_{L-1}^{(2)})}
f_l^{(j)}
\right]. 
\end{align*}
Therefore, the relation \eqref{eq n<l Y} holds. 

We shall show \eqref{eq 1<l Y}. We compute the left hand side of \eqref{eq 1<l Y} as follows.  
\begin{align*}
\mathrm{L.H.S.\ of \ \eqref{eq 1<l Y}}=&\mathrm{Sym}\left[\left(\frac{-t_1^{(1)}}{t_1^{(1)}-1}+
C(1,1,2)\right){1\over t_{L-1}^{(1)}}f_1^{(i)}(t^{(1)}){1\over t_{L-1}^{(2)}}f_l^{(j)}(t^{(2)})
\right]
\\
&+\mathrm{Sym}\left[{t_{1}^{(1)}\over t_1^{(1)}-1}{1\over t_{L-1}^{(1)}}f_1^{(i)}(t^{(1)}){1\over t_{L-1}^{(2)}}f_l(t^{(2)})
\right]. 
\end{align*}
The first line of the right hand side of the relation above becomes the first line of the right hand side of \eqref{eq 1<l Y} 
 in the same way of the proof of \eqref{eq n<l Y}. While, we have
\begin{align*}
&\mathrm{Sym}\left[{1\over t_1^{(1)}-1}{t_{1}^{(1)}\over t_{L-1}^{(1)}}f_1^{(i)}(t^{(1)}){1\over t_{L-1}^{(2)}}f_l(t^{(2)})
\right]
\\
&=\mathrm{Sym}\left[{t_{L-1}^{(1)}+\sum_{m=2}^{L-1}(t_{m-1}^{(1)}-t_m^{(1)})
\over t_1^{(1)}-1}{1\over t_{L-1}^{(1)}}f_1^{(i)}(t^{(1)}){1\over t_{L-1}^{(2)}}f_l(t^{(2)})
\right]
\\
&=\mathrm{Sym}\left[{1\over z_i-1}{1\over t_{L-1}^{(1)}}\left(f_0(t^{(1)})-f_1(t^{(1)})-z_i\sum_{m=2}^{L-1}f_m^{(i)}(t^{(1)})\right){1\over t_{L-1}^{(2)}}f_l(t^{(2)})
\right]. 
\end{align*}
Therefore, the relation \eqref{eq 1<l Y} holds. 
\end{proof}

\begin{lem}\label{lem n=l Y}   
 For $1\le j\neq i\le L-1$, we have 
\begin{align*}
\left(Y_n^{(i)}\right)_{n,j}=&
A_{n,j} \left( \frac{1-\delta_{n,1}}{z_i-1}-\frac{2z_j}{z_i-z_j}  \right) \varphi_{A }(t)+
\frac{1-\delta_{n,1}}{z_i-1}\frac{\left(A_0+1\right)A_{n,j}}{A_{n,i}}
  \varphi_{\left(A_{n,i}-1 \right)}(t) 
  \\
  &+\frac{1-\delta_{n,1}}{z_i-1}\frac{A_{n,j}}{A_{n,i}}\left(\sum_{m=1}^{n-1}
(A_{m,i}+1)
  \varphi_{\left(A_{m,i}+1, A_{n,i}-1 \right)}(t) 
  +z_i\sum_{m=n+1}^{L-1}
(A_{m,i}+1)
  \varphi_{\left(A_{m,i}+1, A_{n,i}-1 \right)}(t)\right)
 \\
&+\left(
A_{n,i}+1
\right) \frac{z_i}{z_i-z_j} \varphi_{\left(A_{n,j}-1, A_{n,i}+1 \right)}(t) 
+\frac{\left(
A_{n,j}+1
\right)A_{n,j}}{A_{n,i}}\frac{z_j}{z_i-z_j}\varphi_{\left(A_{n,j}+1, A_{n,i}-1 \right)}(t),  
\end{align*}
and 
for $j=i$, we have
\begin{align*}
\left(Y_n^{(i)}\right)_{n,i}=&
\left(
A_{n,i}-1
\right)\left(\frac{ z_i-\delta_{n,1}}{z_i-1}\right)
  \varphi_{A}(t)+
\frac{ 1-\delta_{n,1}}{z_i-1}\frac{\left(A_0+1\right)\left(A_{n,i}-1\right)}{A_{n,i}}
  \varphi_{\left(A_{n,i}-1 \right)}(t) 
  \\
  &+\frac{1-\delta_{n,1}}{z_i-1}\frac{A_{n,i}-1}{A_{n,i}}\left(\sum_{m=1}^{n-1}
(A_{m,i}+1)
  \varphi_{\left(A_{m,i}+1, A_{n,i}-1 \right)}(t) 
  +z_i\sum_{m=n+1}^{L-1}
(A_{m,i}+1)
  \varphi_{\left(A_{m,i}+1, A_{n,i}-1 \right)}(t)\right). 
\end{align*}
\end{lem}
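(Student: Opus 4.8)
The plan is to mirror the two-variable reduction already carried out in Lemmas \ref{lem l<n Y} and \ref{lem n<l Y}, now specialized to the diagonal level $l=n$. After symmetrizing over $\frak{S}_M^{L-1}$, every summand of $\left(Y_n^{(i)}\right)_{n,j}$ couples only the pivot variable at position $S_n^{(i)}$, carrying $f_n^{(i)}$, with the single target variable at position $S_n^{(j)}$ (respectively $S_n^{(i)}-1$ when $j=i$), carrying $f_n^{(j)}$ (respectively $f_n^{(i)}$); all remaining factors of $\bar\varphi_A(t)$ are inert under the relevant transpositions. It therefore suffices to establish, modulo coboundaries, a two-variable identity of the same shape as \eqref{eq l<n Y} and \eqref{eq n<l Y}, but with both variables $t^{(1)}$ and $t^{(2)}$ attached to level $n$, the symmetrization now running over $\frak{S}_2^{L-1}$.

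The telescoping machinery then applies as before. For $m>n$ the summand of $C(n,1,2)$ is precisely the one collapsed by the induction identity \eqref{eq l<n claim} and, in the upper range, its companion \eqref{eq n<l claim}: each step cancels a pair of poles at the cost of interchanging $t_m^{(1)}\leftrightarrow t_m^{(2)}$, until at $m=L-1$ a single factor $1/(t_{L-1}^{(1)}-t_{L-1}^{(2)})$ remains. A final interchange at level $L-1$ followed by the Jacobi-type relation \eqref{Jacobi identity} converts this factor into the $z_i/(z_i-z_j)$ and $z_j/(z_i-z_j)$ combinations multiplying the box-moving forms $\varphi_{(A_{n,j}-1,A_{n,i}+1)}(t)$ and $\varphi_{(A_{n,j}+1,A_{n,i}-1)}(t)$. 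Rewriting the surviving $t_{L-1}$-factor at the endpoint through \eqref{eq f0} is what creates the $1/(z_i-1)$ contributions together with the descent forms $\varphi_{(A_{n,i}-1)}(t)$ and $\varphi_{(A_{m,i}+1,A_{n,i}-1)}(t)$.

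The main obstacle, absent from the two earlier lemmas, is the collision of the pivot and target at the common level $n$: since both $f_n^{(i)}(t^{(1)})$ and $f_n^{(j)}(t^{(2)})$ omit the level-$n$ factor $1/(t_{n-1}-t_n)$, the initial $m=n$ step of the telescoping cannot be invoked verbatim and must be set up afresh. In particular the self-coupling middle piece $2/(t_n^{(1)}-t_n^{(2)})$ of $W^{(i)}_{n,m}$, which descends from the exponent $2/\ka$ of the Vandermonde factor $\prod(t_n^{(a)}-t_n^{(b)})^{2/\ka}$ in $U(t)$, has to be isolated and symmetrized on its own; it is the origin of the coefficient $-2z_j/(z_i-z_j)$ on $\varphi_A(t)$ in the off-diagonal case, while the two outer pieces feed into the telescoping as usual. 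Book-keeping of multiplicities is also delicate: for $j\neq i$ the count is $A_{n,j}$, whereas for $j=i$ the pivot variable is itself excluded (the sum in $W^{(i)}_{n,m}$ following \eqref{eq Y Z} omits $a=S_n^{(i)}$), giving $A_{n,i}-1$ and hence the different prefactors in the two displayed formulas.

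Finally I would settle the $\delta_{n,1}$ dependence. For $n\geq 2$ the $1/(z_i-1)$ terms survive from the endpoint rewriting described above, producing the $\frac{1-\delta_{n,1}}{z_i-1}$ contributions. For $n=1$ the extra boundary term $t_1^{(1)}/(t_1^{(1)}-1)$ present in $C(1,1,2)$ must be added and rewritten via \eqref{eq f0}, and the resulting cancellation against the endpoint contribution is precisely what turns off those $1/(z_i-1)$ descent terms, explaining the weight $1-\delta_{n,1}$ and the shift $z_i\mapsto z_i-\delta_{n,1}$ in the $\varphi_A(t)$ coefficient of the $j=i$ case. Assembling the outputs of the two telescoping ranges, the isolated self-coupling piece, and the boundary term, and treating $j\neq i$ and $j=i$ separately, reproduces the two claimed expressions. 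I expect the same-level initial step and this $n=1$ cancellation to be the only genuinely new points; the rest is the routine but lengthy rational-function algebra already exercised in Lemmas \ref{lem l<n Y} and \ref{lem n<l Y}.
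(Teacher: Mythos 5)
There is a genuine gap here: you correctly isolate the diagonal collision at level $n$ as the new difficulty, but you never resolve it, and the mechanism you sketch for resolving it is not the one that works. Your claim that for $m>n$ the summands of $C(n,1,2)$ are "precisely the ones collapsed by" \eqref{eq l<n claim} and \eqref{eq n<l claim} fails at $l=n$: both of those telescoping inductions are anchored on the level-$n$ factor $1/(t_{n-1}^{(2)}-t_n^{(2)})$ (resp.\ its analogue on the other leg), which is exactly the factor that $f_n^{(j)}(t^{(2)})$ omits, so the chain never gets started and there is nothing to "feed the outer pieces into." What actually replaces the telescoping in the paper's proof is qualitatively different: (a) the lone pole at target level $n-1$, namely $-t_n^{(1)}/(t_n^{(1)}-t_{n-1}^{(2)})$, is rewritten using the telescoping identity $t_n^{(1)}=t_{L-1}^{(1)}+\sum_{m=n+1}^{L-1}(t_{m-1}^{(1)}-t_m^{(1)})$ and \eqref{eq f0}, and this single term is the source of \emph{all} the $1/(z_i-1)$ descent contributions (for $n=1$ it is cancelled outright by the boundary term $\delta_{n,1}t_1^{(1)}/(t_1^{(1)}-1)$, which is the true origin of the $1-\delta_{n,1}$ weights — not a cancellation against an endpoint of a telescoped chain); (b) for each intermediate target level $n\le m\le L-2$ the regrouped triple of poles $\bigl(-t_{m+1}^{(1)}/(t_{m+1}^{(1)}-t_m^{(2)})+2t_m^{(1)}/(t_m^{(1)}-t_m^{(2)})-t_{m-1}^{(1)}/(t_{m-1}^{(1)}-t_m^{(2)})\bigr)$ annihilates $\mathrm{Sym}[f_n^{(i)}(t^{(1)})f_n^{(j)}(t^{(2)})\cdots]$ identically — it vanishes rather than telescopes.

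This also means your attribution of the coefficient $-2z_j/(z_i-z_j)$ to the level-$n$ self-coupling $2/(t_n^{(1)}-t_n^{(2)})$ is wrong for $n<L-1$: that piece sits inside the vanishing triple of (b). The $z_j/(z_i-z_j)$ terms, including the factor $2$, come from the surviving level-$(L-1)$ self-coupling $2/(t_{L-1}^{(1)}-t_{L-1}^{(2)})$, antisymmetrized and expanded as $\frac{z_j}{z_i-z_j}(f_n^{(i)}-f_n^{(j)})(t^{(1)})(f_n^{(i)}-f_n^{(j)})(t^{(2)})$ plus the diagonal product $f_n^{(i)}(t^{(1)})f_n^{(i)}(t^{(2)})$; the $2$ is produced by the two cross terms of that square. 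Since the step you defer as "set up afresh" together with the vanishing observation constitute essentially the entire content of the lemma, and the roadmap you give for the "routine rational-function algebra" points in the wrong direction, the proposal does not establish the statement. (Your bookkeeping of the multiplicities $A_{n,j}$ versus $A_{n,i}-1$, and the reduction to a two-variable identity under $\mathrm{Sym}$, are correct and do match the paper.)
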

\begin{proof}
It suffices to show that 
\begin{align}
&\mathrm{Sym}\left[
C(n,1,2){1\over t_{L-1}^{(1)}}f_n^{(i)}(t^{(1)}){1\over t_{L-1}^{(2)}}f_n^{(j)}(t^{(2)})
\right]\nn
\\
&=\frac{1-\delta_{n,1}}{z_i-1}
\mathrm{Sym}\left[{1\over t_{L-1}^{(1)}}\left(-f_0(t^{(1)})+\sum_{m=1}^{n}f_m^{(i)}(t^{(1)})+z_i\sum_{m=n+1}^{L-1}f_m^{(i)}(t^{(1)})\right){1\over t_{L-1}^{(2)}}f_n^{(j)}(t^{(2)})
\right]\nn
\\
&+\mathrm{Sym}\left[
{1\over t_{L-1}^{(1)}}f_n^{(i)}(t^{(1)}){1\over t_{L-1}^{(2)}}f_n^{(i)}(t^{(2)})
\right]\nn
\\
&+{z_j\over z_i-z_j}\mathrm{Sym}\left[
{1\over t_{L-1}^{(1)}}{1\over t_{L-1}^{(2)}}\left(f_n^{(i)}(t^{(1)})-f_n^{(j)}(t^{(1)})\right)
\left(
f_n^{(i)}(t^{(2)})-f_n^{(j)}(t^{(2)})\right)
\right], 
\label{eq l=n Y}
\end{align}
where the symmetrization $\mathrm{Sym}[f(t)]$ stands for $\sum_{\sigma\in\frak{S}_2^{L-1}}\sigma(f(t))$ 
(see \eqref{eq sigma}), and 
if $j=i$, then 
we understand that the third line of the right hand side of \eqref{eq l=n Y} is vanished. 

Firstly,  we have
\begin{align*}
&\mathrm{Sym}\left[\frac{-t_n^{(1)}}{t_n^{(1)}-t_{n-1}^{(2)}}
{1\over t_{L-1}^{(1)}}f_n^{(i)}(t^{(1)}){1\over t_{L-1}^{(2)}}f_n^{(j)}(t^{(2)})
\right]
\\
&=\mathrm{Sym}\left[{t_{L-1}^{(1)}+\sum_{m=n+1}^{L-1}(t_{m-1}^{(1)}-t_m^{(1)})
\over t_n^{(1)}-t_{n-1}^{(1)}}{1\over t_{L-1}^{(1)}}f_n^{(i)}(t^{(1)}){1\over t_{L-1}^{(2)}}f_n^{(j)}(t^{(2)})
\right]
\\
&={1\over z_i-1}\mathrm{Sym}\left[{1\over t_{L-1}^{(1)}}\left(-f_0(t^{(1)})+\sum_{m=1}^{n}f_m^{(i)}(t^{(1)})+z_i\sum_{m=n+1}^{L-1}f_m^{(i)}(t^{(1)})\right){1\over t_{L-1}^{(2)}}f_n^{(j)}(t^{(2)})
\right]. 
\end{align*}

Secondly, we notice that for $n\le m\le L-2$, we have 
\begin{equation*}
\mathrm{Sym}\left[
 \left(\frac{-t_{m+1}^{(1)}}{t_{m+1}^{(1)}-t_{m}^{(2)}}+\frac{2t_{m}^{(1)}}{t_{m}^{(1)}-t_{m}^{(2)}}
+\frac{-t_{m-1}^{(1)}}{t_{m-1}^{(1)}-t_{m}^{(2)}}\right){1\over t_{L-1}^{(1)}}f_n^{(i)}(t^{(1)}){1\over t_{L-1}^{(2)}}f_n^{(j)}(t^{(2)})
\right]=0. 
\end{equation*}

Thirdly, we compute the remaining term as follows. 
\begin{align*}
&\mathrm{Sym}\left[\frac{2}{t_{L-1}^{(1)}-t_{L-1}^{(2)}}
f_n^{(i)}(t^{(1)}){1\over t_{L-1}^{(2)}}f_n^{(j)}(t^{(2)})\right]
\\
&=\mathrm{Sym}\left[
\frac{-1}{t_{L-1}^{(1)}-t_{L-1}^{(2)}}
f_{n}^{(j)}(t^{(1)}){1\over t_{L-1}^{(1)}}f_{n}^{(i)}(t^{(2)})+\frac{1}{t_{L-1}^{(1)}-t_{L-1}^{(2)}}
f_n^{(i)}(t^{(1)}){1\over t_{L-1}^{(2)}}f_{n}^{(j)}(t^{(2)})
\right]\nn
\\
&=\mathrm{Sym}\left[{1\over t_{L-1}^{(1)}}{1\over t_{L-1}^{(2)}}
f_{n}^{(i)}(t^{(1)})f_{n}^{(i)}(t^{(2)})\frac{1-z_j(t_{L-1}^{(1)}+t_{L-1}^{(2)})+z_iz_jt_{L-1}^{(1)}t_{L-1}^{(2)}}{(1-z_jt_{L-1}^{(1)})(1-z_jt_{L-1}^{(2)})}
\right]. 
\end{align*}
Therefore, the relation \eqref{eq l=n Y} holds. 
\end{proof}

\begin{lem}\label{lem l=0 Y}
We have
\begin{align*}
\left(Y_n^{(i)}\right)_0=&
-\frac{1+\delta_{n,1}}{z_i-1}A_0  \varphi_{A}(t)
+\frac{z_i}{z_i-1}
\sum_{m=1}^{L-1}\left(
A_{m,i}+1
\right)  \varphi_{\left(A_{m,i}+1 \right)}(t) 
\\
&-\delta_{n,1}\frac{1}{(z_i-1)}{A_0\left(
A_0+1
\right)\over A_{1,i}}
  \varphi_{\left(A_{1,i}-1 \right)}(t) 
-\delta_{n,1}\frac{z_i}{z_i-1}\sum_{m=2}^{L-1}{A_0\left(
A_{m,i}+1
\right)  \over A_{1,i}}\varphi_{\left(A_{m,i}+1, A_{1,i}-1 \right)}(t). 
\end{align*}
\end{lem}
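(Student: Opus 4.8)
The plan is to follow the two-variable reduction already used in Lemmas \ref{lem l<n Y}, \ref{lem n<l Y} and \ref{lem n=l Y}. By its definition in \eqref{eq Y Z}, $\left(Y_n^{(i)}\right)_0$ is $A_0$ times the pairwise interaction, encoded by $C(n,S_n^{(i)},M-A_0+1)$, between the charged variable $t^{(S_n^{(i)})}$ (carrying $f_n^{(i)}$) and one representative neutral variable $t^{(M-A_0+1)}$ (carrying $f_0$); the prefactor $A_0$ records that all $A_0$ neutral variables are interchangeable under $\mathrm{Sym}$. Writing $t^{(1)}$ for the charged and $t^{(2)}$ for the neutral variable, the whole statement will follow once one establishes an $\frak{S}_2^{L-1}$-identity expressing
\begin{equation*}
\mathrm{Sym}\left[C(n,1,2)\,\frac{1}{t_{L-1}^{(1)}}f_n^{(i)}(t^{(1)})\,\frac{1}{t_{L-1}^{(2)}}f_0(t^{(2)})\right]
\end{equation*}
as a linear combination of $f_0(t^{(1)})f_0(t^{(2)})$, of $f_m^{(i)}(t^{(1)})f_0(t^{(2)})$, and of their point-$1$ descendants, which upon re-symmetrization over $\frak{S}_M^{L-1}$ become the forms $\varphi_A$, $\varphi_{(A_{m,i}+1)}$, $\varphi_{(A_{1,i}-1)}$ and $\varphi_{(A_{m,i}+1,A_{1,i}-1)}$ appearing in the statement.

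First I would run the telescoping of \eqref{eq l<n claim}: by induction on $k$ one collapses the partial sum $\sum_{m=n}^{k}t_m^{(1)}(\cdots)$ level by level, at each step interchanging $t_m^{(1)}$ with $t_m^{(2)}$ inside $\mathrm{Sym}$. Having $f_0(t^{(2)})$ in place of $f_l^{(j)}(t^{(2)})$ changes only which factors of the second variable are cancelled, not the mechanism. Carrying this to $k=L-2$ and then interchanging $t_{L-1}^{(1)}$ with $t_{L-1}^{(2)}$ at the top level leaves the factor $\tfrac{1}{1-z_it_{L-1}^{(2)}}f_0(t^{(2)})$, the neutral variable now seeing the weight $z_i$ of the charged one. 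Here I would invoke the consequence of \eqref{eq f0}
\begin{equation*}
\frac{1}{1-z_it_{L-1}}f_0(t)=-\frac{1}{z_i-1}f_0(t)+\frac{z_i}{z_i-1}\sum_{m=1}^{L-1}f_m^{(i)}(t),
\end{equation*}
which converts the neutral variable into a superposition of one neutral and $L-1$ charged contributions. After re-symmetrization these account, respectively, for $-\tfrac{1}{z_i-1}A_0\,\varphi_A(t)$ and for $\tfrac{z_i}{z_i-1}\sum_{m=1}^{L-1}(A_{m,i}+1)\varphi_{(A_{m,i}+1)}(t)$, the weight $A_{m,i}+1$ reflecting that a neutral variable has been promoted to an $f_m^{(i)}$-variable.

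It remains to treat the term $\delta_{n,1}\,t_1^{(1)}/(t_1^{(1)}-1)$ present in $C(n,1,2)$ when $n=1$, the single-variable pole of the charged variable at the point $1$. This is handled exactly as the analogous piece in the proof of Lemma \ref{lem n<l Y}: writing $t_1^{(1)}=t_{L-1}^{(1)}+\sum_{m=2}^{L-1}(t_{m-1}^{(1)}-t_m^{(1)})$ and completing $f_1^{(i)}(t^{(1)})$ to a full product via $1/(t_1^{(1)}-1)=-1/(t_0^{(1)}-t_1^{(1)})$ produces the combination $f_0(t^{(1)})-f_1^{(i)}(t^{(1)})-z_i\sum_{m=2}^{L-1}f_m^{(i)}(t^{(1)})$ divided by $z_i-1$; this yields the $\delta_{n,1}$ forms $\varphi_{(A_{1,i}-1)}$ and $\varphi_{(A_{m,i}+1,A_{1,i}-1)}$ and an extra $-\tfrac{1}{z_i-1}A_0\,\varphi_A$, so that the coefficient of $\varphi_A$ becomes $-\tfrac{1+\delta_{n,1}}{z_i-1}A_0$. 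The main obstacle is not the telescoping but the combinatorial bookkeeping at this last stage: when a neutral variable is promoted to a charged variable of type $m$, re-symmetrization over $\frak{S}_M^{L-1}$ rescales the multiplicities, and one must verify that precisely the weights $A_{m,i}+1$, $A_0+1$ and the division by $A_{1,i}$ emerge, with the correct signs and with the powers of $z_i$ distinguishing the level $m=1$ from $m\ge 2$. Tracking these multiplicities is the delicate point.
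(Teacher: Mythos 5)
Your proposal is correct and follows essentially the same route as the paper: reduce to a two-variable identity for $\mathrm{Sym}[C(n,1,2)\,t_{L-1}^{(1)-1}f_n^{(i)}(t^{(1)})\,t_{L-1}^{(2)-1}f_0(t^{(2)})]$, telescope via the claim \eqref{eq l<n claim} specialized to $l=0$ up to $k=L-2$ with a final swap of $t_{L-1}^{(1)}$ and $t_{L-1}^{(2)}$, convert the resulting $\frac{1}{1-z_it_{L-1}^{(2)}}f_0(t^{(2)})$ by the consequence of \eqref{eq f0} you state (which is indeed correct), and treat the $\delta_{n,1}$ pole at $1$ separately via $t_1^{(1)}=t_{L-1}^{(1)}+\sum_{m=2}^{L-1}(t_{m-1}^{(1)}-t_m^{(1)})$, yielding $\frac{1}{z_i-1}\bigl(f_0(t^{(1)})-f_1^{(i)}(t^{(1)})-z_i\sum_{m\ge 2}f_m^{(i)}(t^{(1)})\bigr)$. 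The multiplicity bookkeeping you flag (signs $(-1)^{M-A_0}$ and ratios of multinomial coefficients producing $A_0+1$, $A_{m,i}+1$ and the division by $A_{1,i}$) does come out as stated, so there is no gap.
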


\begin{proof}
It suffices to show that 
\begin{align}
&\mathrm{Sym}\left[
C(n,1,2){1\over t_{L-1}^{(1)}}f_n^{(i)}(t^{(1)}){1\over t_{L-1}^{(2)}}f_0(t^{(2)})
\right]\nn
\\
&=\mathrm{Sym}\left[{1\over (z_i-1)t_{L-1}^{(1)} t_{L-1}^{(2)}}f_n^{(i)}(t^{(1)})\left(-(1+\delta_{n,1})
f_0(t^{(2)})
+z_i
\sum_{m=1}^{L-1}f_m^{(i)}(t^{(2)})
\right)
\right]\nn
\\
&+\delta_{n,1}\mathrm{Sym}\left[
{1\over(z_i-1) t_{L-1}^{(1)} t_{L-1}^{(2)}}f_0(t^{(2)})\left(f_0(t^{(1)})-z_i\sum_{m=2}^{L-1}f_m^{(i)}(t^{(1)})\right)
\right], \label{eq l=0}
\end{align}
where the symmetrization $\mathrm{Sym}[f(t)]$ stands for $\sum_{\sigma\in\frak{S}_2^{L-1}}\sigma(f(t))$ and 
the rational functions $f_0(t^{(a)})$ are defined in Definition \ref{def integral formula}. 

Firstly, using \eqref{eq l<n claim} for $l=0$ and $k=L-2$, we have 
\begin{align*}
&\mathrm{Sym}\left[\left(C(n,1,2)+\delta_{n,1}{-1\over t_1^{(1)}-1}{t_{1}^{(1)}\over t_{L-1}^{(1)}}\right)
{1\over t_{L-1}^{(1)}}f_n^{(i)}(t^{(1)}){1\over t_{L-1}^{(2)}}f_0(t^{(2)})
\right]
\\
&=\mathrm{Sym}\left[
\frac{-1}{t_{L-1}^{(1)}-t_{L-1}^{(2)}}\frac{1-z_it_{L-1}^{(1)}}{1-z_it_{L-1}^{(2)}}
f_{n}^{(i)}(t^{(1)}){1\over t_{L-1}^{(1)}}f_{0}(t^{(2)})+\frac{1}{t_{L-1}^{(1)}-t_{L-1}^{(2)}}
f_n^{(i)}(t^{(1)}){1\over t_{L-1}^{(2)}}f_{0}(t^{(2)})
\right]
\\
&=
\mathrm{Sym}\left[{1\over t_{L-1}^{(1)}}f_n^{(i)}(t^{(1)})\frac{1}{t_{L-1}^{(2)}(1-z_it_{L-1}^{(2)})}f_{0}(t^{(2)}) 
\right]. 
\end{align*}

Secondly, we have 
\begin{align*}
&\mathrm{Sym}\left[{1\over t_1^{(1)}-1}{t_{1}^{(1)}\over t_{L-1}^{(1)}}f_1^{(i)}(t^{(1)}){1\over t_{L-1}^{(2)}}f_0(t^{(2)})
\right]
\\
&=\mathrm{Sym}\left[{t_{L-1}^{(1)}+\sum_{m=2}^{L-1}(t_{m-1}^{(1)}-t_m^{(1)})
\over t_1^{(1)}-1}{1\over t_{L-1}^{(1)}}f_1^{(i)}(t^{(1)}){1\over t_{L-1}^{(2)}}f_0(t^{(2)})
\right]
\\
&=\mathrm{Sym}\left[{1\over z_i-1}{1\over t_{L-1}^{(1)}}\left(f_0(t^{(1)})-f_1(t^{(1)})-z_i\sum_{m=2}^{L-1}f_m^{(i)}(t^{(1)})\right){1\over t_{L-1}^{(2)}}f_0(t^{(2)})
\right]. 
\end{align*}

Therefore, the relation \eqref{eq l=0} holds. 
\end{proof}


\bigskip

{\bf Acknowledgements.} 
The author is grateful to 
 T.~Tsuda  
 and Y.~Yamada 
for helpful discussions.
This work was partially supported by Grant-in-Aid for Japan Society for the Promotion Science Fellows 22-2255.



\begin{thebibliography}{[FJKLM]}


\bibitem{ATY}
H.~Awata, A.~Tsuchiya and Y.~Yamada, 
Integral formulas for the WZNW Correlation Fucntions, 
Nucl. Phys. {\bf B365} (1991) 680--696

\bibitem{bjork}
J.~Bj\"ork, 
 Rings of Differential Operators,
North-Holland Publishing Company,
1979


\bibitem{FS}
K.~Fuji and T.~Suzuki, 
Drinfeld-Sokolov hierarchies of type A and fourth order Painlev\' systems, Funkcial. Ekvac. 53 (2010), 143--167

\bibitem{Garnier}
R. Garnier, Sur des e\'quations diffe\'rentielles du troisie\`me ordre dont l'int\'egrale g\'en\'erale est uniforme et sur une classe d'\'equations nouvelles d'ordre 
sup\'erieur dont l'int\'egrale g\'en\'erale a ses points critiques fixes, {\em Ann. Sci. Ecole Norm. Sup.} {\bf 29} (1912), 1--126.


\bibitem{Har}
J,~Harnad, 
Quantum isomonodromic deformations and the Knizhnik-Zamolodchikov
equations. Symmetries and integrability of difference equations 
(Est\'erel, PQ, 1994),
155--161, CRM Proc. Lecture Notes, 9, Amer.\ Math.\ Soc., Providence, RI, 1996


\bibitem{KO}
H.~Kimura and K.~Okamoto,  On the polynomial Hamiltonian structure of the Garnier system, J. Math. 
Pures Appl. 63, 129--146 (1984) 

H.~Nagoya,
A quantization of the sixth Painlev\'e equation, 
Noncommutativity and singularities, 291--298, {\em Adv. Stud. Pure Math.} {\bf 55} {\em Math. Soc. Japan, Tokyo} (2009)

\bibitem{N HGS}
H.~Nagoya, 
Hypergeometric solutions to Schr\"odinger equations for the quantum Painlev\'e equations, {\em J. Math. Phys. } {\bf  52}  (2011)



\bibitem{R}
N.~Reshetikhin,
The Knizhnik-Zamolodchikov system as a deformation of the isomonodromy problem,
{\em Lett. Math. Phys.} {\bf 26} (1992), 167--177



\bibitem{SV} 
V.V. Schechtman and A. Varchenko,
Hypergeometric solutions of Knizhnik-Zamolodchikov equations, 
Lett. Math. Phys. {\bf  20} (1990) 279--283


\bibitem{S1}
T.~Suzuki, 
A class of higher order Painleve systems arising from integrable hierarchies of type A, 
arXiv:1002.2685v2

\bibitem{S2}
T.~Suzuki, 
A Particular Solution of a Painlev\'e System in Terms of the Hypergeometric Function $_{n+1}F_{n}$, 
SIGMA 6 (2010), 078, 11 pages

\bibitem{Teschner}
J.~Teschner, 
Quantization of the Hitchin moduli spaces, Liouville theory, and the geometric Langlands
correspondence I arXiv:1005.2846

\bibitem{T1}
T.~Tsuda, 
Hypergeometric solution of a certain polynomial Hamiltonian system of isomonodromy type, 
Q.~J.~Math (2010)

\bibitem{T2}
T.~Tsuda, 
UC hierarchy and monodromy preserving deformation, 
arXiv:1007.3450v1

\bibitem{Y}  
Y.~ Yamada, 
A quantum isomonodromy equation and its application to $\mathcal{N}=2$ $SU(N)$ gauge theories
{\em J. Phys. A: Math. Theor.} {\bf 44} (2011)



\end{thebibliography}
\end{document}